\documentclass[11pt]{article}
\usepackage[utf8]{inputenc}
\usepackage[T1]{fontenc}
\usepackage{lmodern}
\usepackage{amsmath, amssymb, amsfonts, amsthm}
\usepackage[english]{babel}
\usepackage[letterpaper,top=2cm,bottom=2cm,left=3cm,right=3cm,marginparwidth=1.75cm]{geometry}
\usepackage{comment} 
\usepackage{graphicx}
\usepackage[colorlinks=true, allcolors=blue]{hyperref}
\usepackage[dvipsnames]{xcolor}
\usepackage{tikz}
\usetikzlibrary{arrows.meta}
\providecolor{navy}{RGB}{25,44,75}
\providecolor{accent}{RGB}{190,65,45}
\usepackage{microtype}

\newtheorem{theorem}{Theorem}
\newtheorem{lemma}[theorem]{Lemma}

\newtheorem{corollary}[theorem]{Corollary}

\newcommand\E{\mathbb{E}}
\newcommand\Prob{\mathbb{P}}

\newcommand\G{\mathbb{G}}

\newcommand\cF{\mathcal{F}}

\newcommand{\whp}{\text{whp}}

\newenvironment{proof2}
  {\begin{proof}[Proof of the upper bound]}
  {\end{proof}}

\makeatletter
\let\originalincludegraphics\includegraphics
\renewcommand{\includegraphics}[2][]{%
  \IfFileExists{#2.pdf}{\originalincludegraphics[#1]{#2.pdf}}{%
    \IfFileExists{#2.png}{\originalincludegraphics[#1]{#2.png}}{%
      \IfFileExists{#2.jpg}{\originalincludegraphics[#1]{#2.jpg}}{%
        \fbox{\parbox[c][4cm][c]{0.46\textwidth}{%
          \centering Missing figure file: \texttt{#2}}}%
      }%
    }%
  }%
}
\makeatother

\title{Between the Semi-Random Process and $\G(n,p)$}
\author{Margarita Akhmejanova \footnote{Matematiska institutionen, Uppsala universitet, Box 480, 751 06, Uppsala, Sweden. Email: mechmathrita@gmail.com. Funded by the Wenner-Gren Foundation.}, Andrey Kupavskii}
\date{}
\begin{document}
\maketitle

\begin{abstract}
We study a random graph process that connects the Erd\H{o}s--R\'enyi random graph with the semi-random process.  In each round, a uniformly random vertex
is revealed together with a uniformly random candidate set of $s$ possible
partners, and Builder chooses one of them. We study how this limited adaptivity $s$ changes the hitting times of monotone graph properties and how the familiar logarithmic ``coupon--collector'' delays in random graphs are reflected in this setting.
\end{abstract}

\tableofcontents
\section{Introduction}\label{Intro}
A striking feature of random graph models is the presence of \emph{phase
transitions}.  A small change in the model parameter $p(n)$ around a
\emph{threshold value} may yield a completely different behavior in the
resulting random graph $\G(n,p)$.  This phenomenon appeared in the
papers of Erd\H{o}s and R\'enyi
\cite{ErdosRenyi1959,ErdosRenyi1960}, in the emergence of the giant
component around $p(n)=1/n$.

Determining the thresholds for a variety of natural graph properties, and describing their emergence around this threshold, has been one of the major themes in random graph theory ever since its inception. A plethora of results describes when a small change in the probability $p(n)$ of an edge or link can suddenly create global connectivity, a Hamilton cycle, a perfect matching, $k$-factors or dense subgraphs such as cliques.

Recent breakthrough results by Frankston, Narayanan, Kahn and Park \cite{FKNP2021} and by Park and Pham \cite{ParkPhamKK} give a general description of the location of thresholds, up to a logarithmic factor, showing that the threshold is always within a logarithmic factor of the so-called expectation threshold.

These developments raise a natural and central question: when is the logarithmic factor truly necessary, and what underlying mechanisms dictate its presence or absence? Sometimes a full $\log n$ factor is necessary — for instance, in $\G(n,p)$, the appearance of perfect matchings or connectivity requires $p \sim (\log n)/n$ \cite{ErdosRenyi1960,BollobasRG}. In other cases, only a fractional log arises: for example, the threshold for covering $\G(n,p)$ with vertex-disjoint $K_{r+1}$ cliques (a $K_{r+1}$-factor) occurs at $p=\Theta\!\left(n^{-2/(r+1)}
  (\log n)^{2/(r(r+1))}\right)$ \cite{JohanssonKahnVu2008}. In yet other cases, no logarithmic factor is needed: for every fixed $k\ge2$, the threshold for containing the $k$-th power of a Hamilton cycle is
$p=\Theta(n^{-1/k})$ \cite{Riordan2000,KahnNarayananPark2021}. A widely held heuristic is that a (fractional) log-factor is necessary if and only if the property involves a coupon-collector–type obstruction — for instance, the need for every vertex to reach degree at least one (in the case of perfect matchings) or at least two (for Hamiltonicity). 

This naturally leads to the study of models with varying levels of adaptivity. In the \emph{semi-random graph process} suggested by Michaeli and formally introduced by Ben-Eliezer, Hefetz, Kronenberg, Parczyk, Shikhelman, Stojaković  in \cite{BenEliezer2020}, randomness is mixed with limited guidance: in each round a random vertex arrives, and the Builder chooses whom to connect it to. It has been conjectured (attributed to Prałat) that in this model, no log-factor is ever needed for spanning properties — a constant suffices — because the Builder can directly target the remaining obstructions, reducing coupon collection from $O(n\log n)$ to $O(n)$ rounds. This has been proved for some properties (such as perfect matchings), but remains open in general.

We study an \emph{interpolating} version of this process.  After the random
vertex is revealed, Builder must choose its partner from a uniformly random
candidate set of $s$ vertices, where
\[
    1\le s\le n-1.
\]
Thus for $s=1$, by Lemma \ref{offer-distribution}, the proposed edges are independent and uniform in
$\binom{[n]}2$, so $G_m^1$ is the random-edge process with
replacement. While $s=n-1$
is the ordinary semi-random process.  Varying $s$ measures how a
limited amount of choice changes hitting times.

\subsection{Model definitions and notation}

We write $\G(n,p)$ for the Erdős–Rényi random graph in which each of the $\binom{n}{2}$ possible edges is present independently with probability $p$, and $\G(n,m)$ for the uniform random graph on $[n]$ with exactly $m$ edges, chosen uniformly from all $\binom{\binom{n}{2}}{m}$ labeled graphs.  A graph property $\mathcal P$ (on a fixed vertex set) is \emph{monotone edge-increasing} if
$G\subseteq H$ and $G\in\mathcal P$ imply $H\in\mathcal P$;
\emph{monotone edge-decreasing} if
$G\subseteq H$ and $H\in\mathcal P$ imply $G\in\mathcal P$.
A function $p_c(n)$ is a threshold for a monotone graph property $\mathcal{P}$ in $G(n,p)$ if $\Prob\!\big[G(n,p(n))\in\mathcal{P}\big]\to 0$ if $p(n)/p_c(n)\to 0$, and $\Prob\!\big[G(n,p(n))\in\mathcal{P}\big]\to 1$ if $p(n)/p_c(n)\to\infty$. If the transition window length is $o\!\big(p_c(n)\big)$, we say the threshold is sharp.

\subsubsection*{Our interpolating process}

Fix an integer $s\in\{1,\ldots,n-1\}$.  The process starts from the empty
graph $G_0^s$ on $[n]$.  In round $t\ge1$:
\begin{enumerate}
  \item $u_t$ is chosen uniformly from $[n]$;
  \item conditionally on $u_t$, the candidate set $S_t$ is chosen
  uniformly from
  $
      \binom{[n]\setminus\{u_t\}}{s};
  $
  \item after seeing the complete history and the pair $(u_t,S_t)$,
  Builder chooses $v_t\in S_t$ as he wants, and the edge $\{u_t,v_t\}$ is added.
\end{enumerate}
The pairs $(u_t,S_t)$ are independent in different rounds.  Parallel
edges are allowed, but every graph property in this paper is evaluated on
the underlying simple graph.

Let $\mathcal P$ be a monotone increasing graph property on $[n]$.  For
$0< p<1$, define
\[
 \tau_p(\mathcal P,s)
 :=
 \min\left\{
 m:\sup_{\mathcal A}
 \Prob_{\mathcal A}\bigl( G_m^s\in\mathcal P\bigr)\ge p
 \right\},
\]
where the supremum ranges over all adaptive Builder strategies. We use inequalities involving $\tau(\mathcal P,s)$
as shorthand for high-probability bounds.
Specifically, $\tau(\mathcal P,s)\le m$ means that some
strategy succeeds by time $m$ with probability tending
to $1$, whereas $\tau(\mathcal P,s)>m$ means that every
strategy fails by time $m$ with probability tending
to $1$.
All asymptotic statements are as $n\to\infty$.


\subsection{Previous work}

The idea that many graph properties undergo a sudden (threshold) transition in $G(n,p)$ goes back to Erd\H{o}s--R\'enyi and Gilbert \cite{ErdosRenyi1959,Gilbert1959}.
Classical milestones include the emergence of a giant component at $p=1/n$ and connectivity at $p=(\log n)/n$ \cite{ErdosRenyi1960}.
A general existence theorem—every monotone increasing graph property has some threshold function—was proved by Bollob\'as and Thomason \cite{BollobasThomason1987}.
For subgraph containment, the threshold for a fixed $H$ is $p=n^{-1/m(H)}$ with $m(H)=\max_{F\subseteq H} e(F)/v(F)$ (see work of Ruci\'nski and the first/second moment method developed from \cite{ErdosRenyi1960,BollobasRG}).
The sharpness phenomenon was clarified via Boolean-function methods: Friedgut and Kalai showed that symmetric monotone properties have sharp thresholds  whenever $p_c(n)=n^{-o(1)}$\cite{FriedgutKalai1996}, and Friedgut's sharp-threshold criterion gave a general criterion explaining when thresholds can be coarse versus sharp \cite{Friedgut1999}.

Building on this line of work, Kahn and Kalai proposed the \emph{expectation-threshold} program: for an increasing family $\mathcal F\subset 2^{X}$, say $\mathcal F$ is $p$-small if there is a set system $\mathcal G\subset 2^{X}$ with $\mathcal F\subset \langle\mathcal G\rangle:=\{T:\exists\,S\in\mathcal G,\,S\subseteq T\}$ and $\sum_{S\in\mathcal G} p^{|S|}\le \tfrac12$; then 
$q(\mathcal F):=\sup\{p:\mathcal F\text{ is $p$-small}\}$ is the trivial lower bound  on the critical threshold $p_c(\mathcal F)$. Let also $\ell(\mathcal{F})$ be the maximum between $2$ and  the size of a largest minimal element of $\mathcal{F}$.
Kahn and Kalai conjectured a universal constant $K$ such that
\[
p_c(\mathcal F)\ \le\ K\,q(\mathcal F)\,\log \ell(\mathcal{F}),
\]
which, as emphasized in \cite{KahnKalai2006,FKNP2021}, would already (and indeed was partly motivated to) capture the Erd\H{o}s--R\'enyi thresholds for difficult properties such as perfect matchings in random $r$-uniform hypergraphs (Shamir’s problem) and bounded-degree spanning trees; these were later proved by highly problem-specific methods in \cite{JohanssonKahnVu2008,Montgomery2019}. Talagrand conjectured a fractional relaxation: $\mathcal F$ is \emph{weakly $p$-small} if there exists $g:2^{X}\to\mathbb{R}_{\ge0}$ with 
$\sum_{S\subseteq T} g(S)\ge 1$ for all $T\in\mathcal F$ and $\sum_{S\subseteq X} g(S)\,p^{|S|}\le \tfrac12$; this defines the \emph{fractional expectation-threshold} $q_f(\mathcal F):=\sup\{p:\mathcal F\text{ is weakly $p$-small}\}$, which satisfies
$
q(\mathcal F)\ \le\ q_f(\mathcal F)\ \le\ p_c(\mathcal F)
$
\cite{FKNP2021}. 
Frankston, Kahn, Narayanan and Park proved Talagrand’s fractional form of the conjecture:
\[
p_c(\mathcal F)\ \le\ K\,q_f(\mathcal F)\,\log \ell(\mathcal F),
\]
with an absolute constant $K$ \cite{FKNP2021}. 
Thus $q_f$ captures $p_c$ up to a logarithmic factor (which is tight in many natural cases), yielding a unified route to near-optimal thresholds for a range of classical properties (e.g., subgraph containment and $H$-factors, perfect hypergraph matchings, bounded-degree spanning structures, Hamiltonicity), complementing both the influence-based sharp-threshold theory and the density-driven subgraph thresholds. 
Recently, Park and Pham \cite{ParkPhamKK} proved the full Kahn–Kalai conjecture, establishing
\[
p_c(\mathcal F)\ \le\ C\,q(\mathcal F)\,\log \ell(\mathcal{F})
\]
also for the expectation threshold rather than the fractional expectation threshold. 

The \emph{semi‐random graph process} after the work \cite{BenEliezer2020} was  developed and expanded in studies of many researchers. For example, the upper bound for Hamiltonicity established in \cite{GaoKaminskiMacRuryPralat2022EJC} was improved in \cite{Frieze2022} and further strengthened in \cite{FriezeGaoMacRuryPralatSorkin2025EJC}. Perfect matchings in the semi-random process were resolved in \cite{GaoMacRuryPralat2022SIAM}, with extensions to a semirandom hypergraph model in \cite{MolloyPralatSorkin2025RSA}. The initial paper \cite{BenEliezer2020}, in particular, established that the threshold for cliques of a fixed size $d\geq 3$ has order  $n^{(d-2)/(d-1)}$ up to subpolynomial factors. The chromatic and independence numbers have also been studied, with strategies designed to make the chromatic number
large and the independence number small, see, for example, \cite{GamarnikKangPralat2024SJDM}. 
Variants clarifying the role of adaptivity include the pre- and post-positional semi-random processes \cite{GaoKoerts2024JGT}, the process without replacement \cite{GilboaHefetz2021EuroComb}, and a tree-oriented analogue, the semi-random tree process \cite{BurovaLichev2025EJC}. Sharp thresholds in adaptive random graph processes, conceptually close to our interpolation viewpoint, were obtained in \cite{MacRurySurya2023RSA}. 

Another closely related random graph model with a limited amount of choice is the edge-based Achlioptas process. In the Achlioptas process on graph $H$ with parameter $r$, in each round $r$ distinct
nonedges are sampled uniformly at random, and exactly one of them must be selected and added to the graph; we discuss its connection with our clique
results below.

\subsection{Our results}

We first consider spanning properties. For the ordinary semi-random process
$s=n-1$, linear hitting times for connectivity, fixed minimum degree,
Hamiltonicity, and the existence of a perfect matching were already known:
there is no coupon-collector obstruction in the number of rounds. Our
contribution is to determine the hitting-time order uniformly throughout the
interpolating range $1\le s\le n-1$. In the $s$-semi-random process, the
coupon-collector obstruction reappears in a specific form: it moves from the
time scale to the candidate set scale and produces the term $n(\log n)/s$.
Consequently, the transition to a linear hitting time occurs when $s$
reaches order $\log n$.

\begin{theorem}[Spanning properties]
\label{thm:main-spanning-results}
Let $k\ge1$ be fixed and $\mathrm{conn}$, $\mathrm{\delta\ge k}$, $\mathrm{Ham}$, and
$\mathrm{PM}$ denote the properties of being connected, having minimum degree
at least $k$, containing
a Hamilton cycle, and containing a perfect matching, respectively. Then uniformly for $1\le s\le n-1$,
\[
\begin{aligned}
\tau(\mathrm{conn},s)
&=\Theta\left(n+\frac{n\log n}{s}\right),
&\qquad \tau(\delta\ge k,s)
&=\Theta_k\left(n+\frac{n\log n}{s}\right),\\
\tau(\mathrm{Ham},s)
&=\Theta\left(n+\frac{n\log n}{s}\right),
&\qquad \tau(\mathrm{PM},s)
&=\Theta\left(n+\frac{n\log n}{s}\right)
\quad (n\text{ even}).
\end{aligned}
\]
Moreover, if $s=o(\log n)$, then
$
    \tau(\mathrm{conn},s)
    =(1+o(1))\frac{n\log n}{s+1}.
$
\end{theorem}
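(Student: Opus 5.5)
We prove lower bounds for the weakest property and upper bounds for the strongest ones. Lower bounds come from connectivity and $\delta\ge1$. Every one of the four properties requires $\delta\ge 1$, and $\delta\ge k$ requires $\delta\ge1$. Hence it suffices to show that $\tau(\delta\ge1,s)\ge c\,(n+n\log n/s)$, and, for the refined statement, that $\tau(\delta\ge 1,s)\ge(1-o(1))\frac{n\log n}{s+1}$ when $s=o(\log n)$. Note also that connectivity implies $\delta\ge1$.

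\textbf{Lower bound.} The $\Omega(n)$ term is trivial, since $\delta\ge1$ requires at least $n/2$ edges. For the logarithmic term, fix a vertex $w$. In a single round, $w$ can receive an edge only if $w=u_t$ (probability $1/n$) or $w\in S_t$ (probability $(1-1/n)\,s/(n-1)=s/n$). This is true regardless of Builder's choices. So in each round $w$ is ``touched'' with probability at most $(s+1)/n$, independently of the strategy. Consequently the set of vertices never touched in $m$ rounds stochastically dominates the set of isolated vertices under every strategy. More precisely, the touched sets $\{u_t\}\cup S_t$ do not depend on Builder, and a vertex untouched by them stays isolated. Let $X$ count the untouched vertices. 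Then
\[\E X=n\left(1-\tfrac{s+1}{n}\right)^{m}.\]
For a second moment, two fixed vertices are both untouched in a round with probability $1-\frac{2(s+1)}{n}+O(s^2/n^2)$. This gives $\mathrm{Var}X=o((\E X)^2)$ whenever $\E X\to\infty$ and $s=o(n)$. Taking $m=(1-\varepsilon)\frac{n\log n}{s+1}$ gives $\E X\approx n^{\varepsilon}\to\infty$, so by Chebyshev, whp some vertex is isolated for every strategy. When $s\ge \varepsilon\log n$ (and in particular when $s$ is linear), the bound $\Omega(n\log n/s)=O(n)$ is dominated by the $\Omega(n)$ term anyway, so the lower bound only needs the regime $s\le \log n$.

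\textbf{Upper bound.} Here Builder needs a strategy. A natural two-phase plan runs as follows. In phase one, lasting $C(n+n\log n/s)$ rounds, Builder makes every vertex reach degree at least $k$, and in fact at least some large constant $K$. The strategy is: whenever $S_t$ contains a vertex of degree $<K$, connect $u_t$ to it, and otherwise connect arbitrarily. A fixed deficient vertex lies in $S_t$ with probability $s/n$. It also gains an edge when it is $u_t$ itself, since $u_t$ always receives an edge. Hence after $m$ rounds the vertex has received roughly $\mathrm{Bin}(m,(s+1)/n)$ offers, minus competition from other deficient vertices in the same $S_t$. We would analyse this competition by first running $O(n)$ rounds so that only $o(n)$ vertices remain deficient. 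While the number of deficient vertices is $d$, a round serves some deficient vertex with probability about $\min(1,ds/n)$. A phase analysis like a coupon collector with batches then gives total time $O(n+n\log n/s)$. The final vertices each need $\Theta(n\log n/s)$ rounds, and with $m=Cn\log n/(s+1)$ we get $\Prob(\text{vertex deficient})\le n^{-2}$ by a Chernoff bound on its offer count. For $s=o(\log n)$ and connectivity, we take $m=(1+\varepsilon)n\log n/(s+1)$ and run the simpler strategy ``connect $u_t$ to a currently isolated candidate if one exists''. Each vertex is then touched $\mathrm{Bin}(m,(s+1)/n)$ times, with mean $(1+\varepsilon)\log n$, so whp every vertex is touched. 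Collisions, where two isolated vertices lie in the same $S_t$, are rare because $s$ is small. It remains to show the result is connected whp. For this we interleave a strategy that, among the candidates, prefers endpoints in a different component, which merges components in the manner of a random-graph process and yields connectivity once there are no isolated or small components.

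In phase two, lasting another $C'n$ rounds, we obtain connectivity, Hamiltonicity and perfect matching from the minimum-degree backbone. For $s\ge \log n$ we can simply cite the known linear-time results for the semi-random process, but only if they survive the restriction to $s$ candidates, and they do not directly. Instead, we would use the phase-one graph, which has minimum degree $K$ and contains a random-like $K$-out structure since each vertex's added edges go to nearly uniform partners. Then we apply a P\'osa rotation--extension argument: the random $K$-out-like graph is an expander, and in each later round $u_t$ is a uniform vertex while $S_t$ meets the linear-size set of booster endpoints with probability $1-(1-\Theta(1))^{s}\ge$ constant. So each of $O(n)$ boosters is added within $O(1)$ expected rounds, giving $O(n)$ extra rounds. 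A perfect matching follows from Hamiltonicity for even $n$, and connectivity follows from Hamiltonicity as well. The main obstacle is phase one's competition analysis, namely showing that in the middle range Builder really achieves minimum degree $k$ in $O(n+n\log n/s)$ rounds, together with proving that the resulting graph is a sufficiently good expander for rotations despite the adaptive choices. For the latter we would try to couple phase one with a $K$-out process by using only the randomness of $u_t$ and always ignoring Builder's influence on one endpoint.
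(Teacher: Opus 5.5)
Your lower bound is the paper's unseen-vertex argument (Lemma~\ref{lem:unseen-vertex} plus the trivial $\Omega(n)$ bound), and your booster phase is fine once you have a connected expander. The upper bounds have genuine gaps.

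\textbf{Hamiltonicity, hence PM and your route to connectivity.} The graph left by your greedy minimum-degree phase has no tractable law. A deficient vertex also gains edges as $u_t$, with a partner that Builder selects among $s$ candidates; that partner can land in a prescribed set $Y$ with probability up to about $s|Y|/n$ rather than $|Y|/n$. Moreover, whether a given target is served depends on $u_t$ through $S_t\subseteq[n]\setminus\{u_t\}$ and $N(u_t)$. So ``random $K$-out-like, hence an expander'' is unsupported, yet Lemma~\ref{lem:posa-boosters} needs connectivity and $|N(X)\setminus X|\ge2|X|$ for all $|X|\le\alpha n$.

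The missing device, which your last sentence only gestures at, is to generate an exactly known graph. Fix a balanced partition $A\cup B$ and let $D_A$ be the vertices of $A$ with fewer than $d$ recorded choices. Act only in rounds with $u_t\in B$ and $S_t\cap D_A\ne\varnothing$: pick the target in $S_t\cap D_A$ by a rule depending only on $S_t\cap D_A$, and record $u_t$ as that target's next choice. Given $u_t=b\in B$, the law of $S_t\cap A$ does not depend on $b$, so every vertex of $A$ receives $d$ i.i.d.\ uniform choices in $B$. Swapping roles yields exactly $\mathcal D_d(A,B)$ in $O_d(n+n\log n/s)$ rounds. First-moment arguments for $k$-out graphs then give connectivity and expansion for $d=100$ (Lemmas~\ref{lem:construct-d-out} and~\ref{expanders}).

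\textbf{Minimum degree.} Your time bound is not yet proved. A Chernoff bound on a vertex's offer count says nothing about its degree, because an offer is wasted when another deficient candidate is chosen or when the offered vertex is already adjacent to $u_t$. Your rate $\min\{1,ds/n\}$ is the right one, but it must be applied to the global potential $\Phi_t=\sum_v(k-d_{G_t}(v))_+$, with Builder choosing from $S_t\cap R_t\setminus(N(u_t)\cup\{u_t\})$. Then $\Prob(\Phi_{t+1}\le\Phi_t-1)\ge c_k\min\{1,s\Phi_t/n\}$. Chernoff brings $\Phi$ below $n/s$ in $O(n)$ rounds, and $\E\Phi_T\le(n/s)(1-c_ks/n)^T=o(1)$ for $T=Cn\log n/s$.

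\textbf{The sharp statement for $s=o(\log n)$.} Knowing that every vertex is touched, or even non-isolated, does not give connectivity. ``Connect $u_t$ to an isolated candidate'' creates many small components, for instance isolated edges whenever $u_t$ is itself isolated. With only $(1+\varepsilon)n\log n/(s+1)$ edges, fewer than the $\tfrac12 n\log n$ that $\G(n,m)$ needs once $s\ge2$, the graph is nothing like a random graph. So there is no ``no small components implies connected'' principle to invoke.

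The paper's idea is to fix the target component first. Spend $O(n)=o(n\log n/(s+1))$ rounds on uniform edges to obtain a component $C$ with $|C|\ge n/2$. Thereafter add an edge between $C$ and $R=[n]\setminus C$ whenever $B_t=\{u_t\}\cup S_t$ meets both. The crossing probability is at least $\frac14\min\{1,(s+1)r/n\}$, and equals $(1+o(1))(s+1)r/n$ once $r=|R|\le n/((s+1)\log\log n)$. Hence $\E|R_{t+1}|\le\bigl(1-(1-o(1))\frac{s+1}{n}\bigr)\E|R_t|$ in the final stage, and $R=\varnothing$ whp after $(1+\varepsilon)n\log n/(s+1)$ rounds. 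The quantity to track is $|R|$, whose decay rate matches the touching rate $(s+1)/n$; this is what your sketch lacks.
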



Next we consider local properties. Here we conjecture that the hitting time for a clique $K_d$, $d$ is a constant, is given by the maximum of $d-1$ explicit functions of $n$ and $s$. In the paper we prove the lower bound.

For fixed $d\ge3$,
put
\[
    r_j:=\binom d2-\binom j2
    \qquad 1\le j\le d,
\]
and define
\[
    M_d(n,s):=
    \max_{1\le j\le d-1}
    \left\{
        n^{\,2-d/r_j}s^{-r_{j+1}/r_j}
    \right\}.
\]

\begin{figure}
\centering
\begin{tikzpicture}[x=8cm,y=2.5cm]
 
  \draw[-{Latex[length=1.8mm]},navy,line width=.8pt]
    (0,0) -- (1.06,0) node[right] {$s$};
  \draw[-{Latex[length=1.8mm]},navy,line width=.8pt]
    (0,0) -- (0,1.65)
    node[above right,font=\small] {$M_{4}(n,s)$};

  \foreach \x/\lab in {
    0/1,{4/7}/n^{4/7},{8/9}/n^{8/9},1/n
  }{
    \draw[navy,line width=.6pt]
      (\x,.025) -- (\x,-.025);
    \node[below=1.3mm,font=\scriptsize]
      at (\x,0) {$\lab$};
  }

  \draw[densely dashed,gray!65,line width=.45pt]
    ({4/7},0) -- ({4/7},{6/7});
  \draw[densely dashed,gray!65,line width=.45pt]
    ({8/9},0) -- ({8/9},{2/3});

  \draw[accent,line width=1.15pt]
    (0,{4/3}) -- ({4/7},{6/7})
    -- ({8/9},{2/3}) -- (1,{2/3});

  \fill[accent] ({4/7},{6/7}) circle (.85pt);
  \fill[accent] ({8/9},{2/3}) circle (.85pt);

  \node[
    font=\scriptsize,text=accent,
    fill=white,inner sep=1pt
  ] at (.27,1.29) {$n^{4/3}s^{-5/6}$};

  \node[
    font=\scriptsize,text=accent,
    fill=white,inner sep=1pt
  ] at (.69,1.04) {$n^{6/5}s^{-3/5}$};

  \node[
    font=\scriptsize,text=accent,
    fill=white,inner sep=1pt
  ] at (.96,.84) {$n^{2/3}$};
\end{tikzpicture}

\caption{The scale $M_{4}(n,s)$ on logarithmic axes,
with switch points at $s=n^{4/7}$ and $s=n^{8/9}$.}
\label{fig:k4-scale}
\end{figure}

\begin{theorem}[Clique lower bound]
\label{thm:main-clique-lower}
Fix $d\ge3$.  For every function $\omega(n)\to\infty$,
\[
    \tau(K_d,s)\ge \frac{1}{\omega(n)}M_d(n,s).
\]
\end{theorem}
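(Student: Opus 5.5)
The plan is to prove, separately for each fixed $j\in\{1,\dots,d-1\}$, that if
\[
m\le \frac1{\omega(n)}\,n^{2-d/r_j}s^{-r_{j+1}/r_j},
\]
then every adaptive Builder strategy fails to create a $K_d$ in $G_m^s$ with probability $1-o(1)$. Since $M_d$ is the maximum of these $d-1$ terms, this gives the theorem. The tool will be a first-moment count over \emph{witness structures}, not over cliques directly. This is needed because Builder is adaptive, so the edges of a fixed $d$-set are far from independent. A witness records, for a copy of $K_d$ on vertex set $\{x_1,\dots,x_d\}$ and for each of its $\binom d2$ edges, the round $t_e\le m$ at which the edge first appeared, together with the orientation $(u_{t_e},v_{t_e})$. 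A union bound over witnesses then bounds, for every strategy, the probability that some $K_d$ exists by time $m$.

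The basic estimate is a per-round bound conditional on the history $\mathcal F_{t-1}$. Fix in advance an ordered pair $(x,y)$. The event that round $t$ adds $\{x,y\}$ with $u_t=x$ requires $u_t=x$, which has probability $1/n$, and $y\in S_t$, which has probability $s/(n-1)$. This bound holds regardless of Builder's choice, so each prescribed edge costs at most about $2s/n^2$ per round. Since the conditional bounds hold round by round and the prescribed rounds are distinct, the bounds multiply over the edges of a witness. Summing over the choices of rounds then gives a factor $(ms/n^2)$ per edge.

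Used for every edge, this only yields
\[
n^d\Bigl(\frac{ms}{n^2}\Bigr)^{\binom d2},
\]
which corresponds to the weakest scale. To reach the $j$-th term I will order the clique's vertices (for instance by the time of their first clique edge) and treat two groups of edges differently.
\begin{itemize}
\item The $\binom j2$ edges inside the first $j$ vertices are simply discarded, i.e.\ bounded by $1$.
\item For the remaining $r_j$ edges I aim to show that $j$ of them cost only $m/n^2$ rather than $ms/n^2$, while the other $r_{j+1}=r_j-j$ edges cost $ms/n^2$ each.
\end{itemize}
This would give an expected number of witnesses of order
\[
n^d\Bigl(\frac{m}{n^2}\Bigr)^{r_j}s^{r_{j+1}},
\]
which is exactly $o(1)$ under the assumed bound on $m$ (solving $n^d m^{r_j}n^{-2r_j}s^{r_{j+1}}=1$ recovers $M_d$'s $j$-th term). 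The saving of a factor $s$ should come from the fact that in each round Builder adds only one edge out of the $s$ offered. For the $j$ edges joining the $(j+1)$-st vertex $x_{j+1}$ to $x_1,\dots,x_j$, I want to charge their sum rather than each edge separately. I would first try the following choice: rather than prescribing the endpoint $y$ in advance, let the witness record $y$ only as "the vertex Builder picked". The probability then drops to $1/n$ for $u_t$ alone, while the identity of $y$ is paid for by the factor $n$ in $n^d$ that would otherwise count the choice of that vertex. This trades an $s/n$ for a $1/n$ on one edge per new vertex, in the spirit of an encoding or compression argument.

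The main obstacle is this bookkeeping: making precise which $j$ edges get the cheaper $m/n^2$ rate, and checking that the conditional per-round bounds remain valid under adaptivity when some endpoints are determined by Builder's earlier choices rather than fixed in advance. A potential subtlety is that parallel and repeated offers could allow double counting. I would handle this by indexing witnesses by the first appearance of each edge, so that the $\binom d2$ rounds are distinct and the conditional bounds chain cleanly. Finally, Markov's inequality applied to the number of witnesses finishes the proof for each $j$, and hence for $M_d$.
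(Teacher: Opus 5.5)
\textbf{There is a genuine gap: you have not shown which $j$ edges get the cheap rate, and your tentative choice fails.}

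Your framework is the right one, and it matches the paper's. You use a first-moment bound over certificates in which $d-j$ vertices are named and $j$ vertices are defined as ``whatever Builder picked''. Each retained oriented edge is checked in its recorded round, with conditional probability at most $1/n$ for an introducing edge and $s/[n(n-1)]$ when both endpoints are known. Your target count $n^d(m/n^2)^{r_j}s^{r_{j+1}}$ is correct. But the step you defer as bookkeeping is the substance of the proof.

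For a free vertex to save a factor $s$, it must first appear among the retained edges as the \emph{head} $v_t$ of an edge whose tail $u_t$ is already identified. If it first appears as the offered vertex $u_t$, the check ``the known endpoint lies in $S_t$ and is chosen'' still costs $s/(n-1)$, so nothing is gained. Orientations are fixed by the process, not by you. So the choice of free vertices, and of which $\binom j2$ edges to discard, must depend on the orientation and the chronological order.

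Here is a concrete failure. Take a triangle created by $a\to b$, $a\to c$, $b\to c$ in that order, and $j=2$.
\begin{itemize}
\item Ordering by first clique edge makes $\{a,b\}$ the first two vertices, so you discard $ab$.
\item The remaining edges are $a\to c$ and $b\to c$, in which only $c$ is ever a head.
\item You therefore get one cheap edge instead of two, and the bound $m^2s/n$ rather than $m^2/n$.
\end{itemize}
The correct certificate keeps $a$, frees $b$ and $c$ via $a\to b$ and $b\to c$, and discards $a\to c$. For $j=1$ your choice happens to work: take the free vertex to be the head of the first clique edge. For $j\ge 2$ it does not.

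\textbf{What is missing is the paper's Lemma~\ref{lem:clique-order}.} It states: for every orientation and linear order of $E(K_d)$ and every $1\le j\le d-1$, there are a set $B$ of $d-j$ vertices and at least $r_j$ retained edges such that every vertex outside $B$ first occurs among the retained edges as the head of an edge whose tail is already identified. The paper proves this by a backward sweep with a vertex-cover invariant, writing $\beta(H)$ for the vertex cover number:
\begin{itemize}
\item Repeatedly take the last remaining edge $x\to y$ and retain it.
\item If $\beta(H-xy)\ge\ell$, delete only that edge.
\item Otherwise delete $y$, making it a free variable, together with all its remaining edges, and decrease $\ell$ by one.
\end{itemize}
The invariant $\beta(H)\ge\ell$ forces $d_H(y)\le\ell$ at each vertex deletion. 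Hence at most $\sum_{\ell=1}^{j}(\ell-1)=\binom j2$ non-retained edges are lost in total. Without a lemma of this type, valid for all orientations and orders at once, your union bound does not reach the $j$-th term of $M_d$.
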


For $K_3$ and $K_4$, we also prove that the lower bound match the upper bound up to
arbitrarily slowly growing factor.

\begin{theorem}[Triangles and $K_4$]
\label{thm:intro-K3}
\label{thm:intro-K4}
For every $1\le s\le n-1$ and $d\in\{3,4\}$, the
high-probability hitting-time scale $\tau(K_d,s)$ is equal,
up to an arbitrarily slowly growing multiplicative factor,
to $M_{K_d}(n,s)$, where
\begin{align}
M_{K_3}(n,s)
&:=\max\left\{\;ns^{-2/3}, \sqrt{n}\right\},\\
M_{K_4}(n,s)
&:=\max\left\{
    n^{4/3}s^{-5/6},
    n^{6/5}s^{-3/5},
    n^{2/3}
\right\}.
\end{align}
\end{theorem}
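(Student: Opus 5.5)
The lower bound in both cases is Theorem~\ref{thm:main-clique-lower}. It suffices to check that $M_d(n,s)$ specializes to $M_{K_d}(n,s)$ for $d=3,4$. For $d=3$ we have $r_1=3$, $r_2=2$, $r_3=0$, so the terms are $j=1$: $n^{2-1}s^{-2/3}=ns^{-2/3}$, and $j=2$: $n^{2-3/2}s^{0}=\sqrt n$. For $d=4$ we have $r_1=6$, $r_2=5$, $r_3=3$, $r_4=0$, giving $n^{4/3}s^{-5/6}$, $n^{6/5}s^{-3/5}$ and $n^{2/3}$, as claimed. So the whole work is the matching upper bound: for every $\omega\to\infty$, an explicit Builder strategy that creates $K_d$ within $\omega M_{K_d}(n,s)$ rounds whp.

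The strategies follow the "greedy densification" pattern suggested by the exponents. In the $j$-th regime, Builder first uses a phase of length $m$ to build many copies of $K_j$, or more generally a structure with $j$ vertices. In the remaining phases he waits for a random vertex $u_t$ whose candidate set meets a prescribed target set, and then attaches it to the target.

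For $K_3$, the plan runs in three phases of $m=\omega M$ rounds each. In phase 1, Builder builds a star: whenever the center $c$ lies in $S_t$ he connects $u_t$ to $c$. This gives about $ms/n$ leaves $L$. In phase 2, whenever $u_t\notin L\cup\{c\}$ and $S_t\cap L\neq\emptyset$, he connects $u_t$ to a vertex of $L$. In phase 3, he waits until $u_t\in L$ and $S_t$ contains a vertex $w$ already joined to $N(u_t)\cap L$; a cleaner variant is to close a cherry. The counts are as follows.
\begin{itemize}
\item Phase 2 yields about $m\cdot\min(1,|L|s/n)$ paths $c$--$\ell$--$w$.
\item Phase 3 needs $u_t\in L$ together with a candidate equal to the partner $w$ of $u_t$. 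This happens with probability about $(\#\text{cherries}/n)(s/n)$ per round.
\end{itemize}
So we need $m\cdot m\min(1,ms^2/n^2)\,s/n^2\gg1$. In the regime $ms^2\le n^2$ this reads $m^3s^3\gg n^4$, i.e. $m\gg n^{4/3}s^{-1}$. That is not $ns^{-2/3}$, so this scheme is too crude. The fix is to let each phase use the choice symmetrically: Builder makes $u_t$'s edge to a vertex of large degree whenever possible, building a set $X$ of "hubs" where every pair of hubs has many common neighbours. The target exponent $m^3s^2\gg n^3$ is exactly the first-moment count: $m$ rounds, each able to hit one of the $\approx m^2s^2/n^2$ open cherries with probability $s/n$... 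I would redo this bookkeeping carefully. The case $s\ge n^{3/4}$ is different: with $m=\omega\sqrt n$, Builder takes a path $u_1v_1$, and later, for $u_t$ equal to an endpoint, picks the other. This gives the $\sqrt n$ term, which is the $\G(n,m)$ birthday scale.

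For $K_4$ the same scheme uses three regimes, matching $j=1,2,3$: grow to a $K_{j}$-rich structure greedily, and complete the remaining $r_{j+1}$ edges using hits of candidate sets. Each completion edge costs a factor $s/n$ in probability, and each free vertex choice gains a factor $n$. Choosing the phase lengths to balance gives exactly $n^{2-d/r_j}s^{-r_{j+1}/r_j}$. I would verify each regime by computing the expected number of completed copies, which should be $\gg1$ at $m=\omega M$. I would then use a second moment or Chebyshev bound on the number of successful completion events, which are nearly independent across rounds given the history, to get whp success. This can also be done via a martingale or Chernoff bound on the indicator sums, since each phase counts independent round-wise successes with conditional probability bounded below.

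The main obstacle is designing the strategy in the intermediate $K_4$ regime $n^{4/7}\le s\le n^{8/9}$ (term $n^{6/5}s^{-3/5}$). There Builder must build a dense auxiliary structure, for example many triangles through a common edge or a $K_4^-$-rich configuration, while keeping the number of target pairs large. The step of controlling overlaps, so that the needed candidate hits are not wasted on repeated edges and the second moment remains $(1+o(1))$ times the squared mean, is where I expect most of the work to lie.
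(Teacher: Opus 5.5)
Your lower-bound half is correct and matches the paper: it is Theorem~\ref{thm:main-clique-lower}, and you specialize $M_3$ and $M_4$ correctly. The gap is the upper bound, which is the real content of the theorem beyond Theorem~\ref{thm:main-clique-lower}, and which your proposal does not establish.

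\textbf{Triangles.} For $K_3$ with $s\le n^{3/4}$, the missing idea is to use $a=\gamma n/s$ star centres $R$, not one centre or a few ``hubs''. By \eqref{eq:hit}, a round whose $u_t$ has not yet been used meets $R$ with probability $\Theta(1)$. So $O(m)$ rounds give $N=\varepsilon m$ distinct leaves, distributed over the centres like $N$ balls in $a$ bins. This yields $\binom N2/a=\Theta(m^2s/n)$ cherries. The maximum load can be kept below $\eta n/s$; this needs $s^{1/3}\lesssim n/s$, i.e.\ $s\le n^{3/4}$, which is where the regime switches. Each cherry is then closed in a round with probability $\Theta(s/n^2)$, so a further round closes some cherry with probability $\Omega(m^2s^2/n^3)$. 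Over $O(m)$ rounds this is $\Omega(1)$ exactly when $m^3s^2\gtrsim n^3$, and Corollary~\ref{cor:From_constant_probability_to_whp} upgrades constant probability to whp.

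Your tally, about $m^2s^2/n^2$ cherries each hit with probability $s/n$, gives $m^3s^3/n^3$. Taken at face value it predicts the scale $n/s$, which lies below the proven lower bound $ns^{-2/3}$ for large $s$, so the count itself is wrong. The hub picture does not help either: many common neighbours of a fixed hub pair are worthless, since a single edge closes all of those cherries at once. For $s\ge n^{3/4}$, your strategy (an edge $u_1v_1$, later joining one endpoint to the other) only duplicates that edge and never creates a triangle. The correct step is monotonicity in $s$. Builder keeps a uniformly random $s_0$-subset of $S_t$, with $s_0=\lfloor n^{3/4}\rfloor$; this is a uniform $s_0$-subset of $[n]\setminus\{u_t\}$, so it simulates the $s_0$-process in $\omega\, ns_0^{-2/3}=\omega\sqrt n$ rounds.

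\textbf{$K_4$.} Here there is no strategy at all, only the heuristic that free vertices gain a factor $n$ and completion edges cost a factor $s/n$. That is the first-moment count behind the lower bound; it does not show that any Builder strategy attains it. The obstruction is that Builder picks one partner per round. A prescribed edge $xy$ therefore arrives at rate $\Theta(s/n^2)$ only if, when $u_t=x$, the set $S_t$ typically contains $O(1)$ of the targets Builder wants from $x$. In other words, the target graph needs an orientation with out-degrees at most $\eta n/s$.

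The paper encodes this constraint in Lemma~\ref{lem:bernoulli_subgraph_coupling}: Builder realizes $F_p$ with $p=\kappa ms/n^2$ within $2m$ rounds, via Poissonization and $\E[1/(1+Z)]\ge 1/(1+2\eta)$. It then designs structures that respect the constraint.
\begin{itemize}
\item For $s\le n^{4/7}$: take $\varepsilon n/s$ centres with random $d$-sets $A_r\subseteq X$, $B_r\subseteq Y$, where $d=\delta ms/n$. Sample random page edges between $A_r$ and $B_r$, discarding overlaps. Then close the pairs $yz$ coming from pages $(r,x)$ with exactly two sampled edges.
\item For $n^{4/7}\le s\le n^{8/9}$: use disjoint stars with $|A_r|=\Theta(n/s)$ and $|B_r|=d$. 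Attach $h=\Theta(m^2s/n^2)$ vertices of each $B_r$ to uniform vertices of $A_r$, and close the pairs that share an attachment.
\end{itemize}
In both regimes one must verify $\Delta(H)\le\eta n/s$ and $p|E(H)|=\Theta(1)$ for the closing graph $H$. The $n^{2/3}$ regime again comes from thinning to $s_0=\lfloor n^{8/9}\rfloor$.

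You correctly flagged the intermediate regime as the hard part. Without these constructions, or substitutes for them, the upper bound is unproved for both $K_3$ and $K_4$.
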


Note that $M_{K_d}(n,s)=M_d(n,s)$.

Our results give the hitting-time scales $ns^{-2/3}$ for $K_3$
when $1\le s\le n^{3/4}$ and $n^{4/3}s^{-5/6}$ for $K_4$
when $1\le s\le n^{4/7}$, in both cases up to an arbitrarily
slowly growing multiplicative factor. These scales are analogous
to the corresponding results of Krivelevich and Sp\"ohel
\cite{KrivelevichSpohel2012} for the edge-based Achlioptas
process. For a subpolynomially growing number $r=n^{o(1)}$
of offered edges per round, they proved
\[
    N_0(K_3,r,n)=nr^{-2/3},
    \qquad
    N_0(K_4,r,n)=n^{4/3}r^{-5/6},
\]
where $N_0(H,r,n)$ denotes the threshold scale for constructing
a copy of $H$ in the Achlioptas process. For $d\in\{3,4\}$,
$N_0(K_d,r,n)$ plays the same role as $M_{K_d}(n,s)$ in our
model. For $K_5$ their bounds leave a gap:
\[
    n^{3/2}r^{-9/10}
    \le N_0(K_5,r,n)
    \le n^{3/2}r^{-7/8}.
\]
Note that Krivelevich and Sp\"ohel \cite{KrivelevichSpohel2012}
do not identify additional regimes for these clique thresholds
at larger values of $r$.

\section{Preliminaries}

All logarithms are natural. Floor and ceiling signs are omitted whenever they do not affect the asymptotic estimates. 
We allow Builder to play randomly. When providing upper bounds, we allow Builder to skip rounds, since the properties considered are increasing.

For $A\subseteq[n]$ and $u_t\notin A$, we repeatedly use
\begin{equation}\label{eq:hit}
 \Prob(S_t\cap A\ne\varnothing\mid u_t)
 =
 1-\frac{\binom{n-1-|A|}{s}}{\binom{n-1}{s}}
 =
 \Theta\!\left(\min\left\{1,\frac{s|A|}{n}\right\}\right).
\end{equation}

The explanation of (\ref{eq:hit}) is very simple. Put $|A|=k$. Then the upper bound follows from the union bound over the $s$ sampled vertices from $A$. The lower bound follows from the three estimates:
\begin{equation}
\label{eq:standart_exp_bound}
\frac{\binom{n-1-k}{s}}{\binom{n-1}{s}}\leq \left(1-\frac{k}{n-1}\right)^s\leq e^{-\frac{sk}{n-1}}\qquad \text{and}\qquad 1-e^{-x}\geq \frac{1}{2}\min\{1,x\}, \quad x>0.
\end{equation}

We state the next two facts as lemmas only for repeated references.
\begin{lemma}\label{offer-distribution}
For each round $t$, put
$
B_t:=\{u_t\}\cup S_t.
$
The sets $B_t$ are independent and uniformly distributed over $\binom{[n]}{s+1}$.
\end{lemma}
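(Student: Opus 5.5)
The proof is a direct computation of the distribution of a single round, combined with the independence of the pairs $(u_t,S_t)$ across rounds that is built into the model. Since Builder's choice $v_t$ is made only after $(u_t,S_t)$ is revealed, it does not affect $B_t$. Each $B_t$ is therefore a deterministic function of the fresh randomness $(u_t,S_t)$ of round $t$. Because these pairs are independent across rounds by definition of the process, the sets $B_t$ are independent as well. It remains to show that a single $B_t$ is uniform on $\binom{[n]}{s+1}$.

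Fix any $B\in\binom{[n]}{s+1}$. The event $\{B_t=B\}$ is the disjoint union, over $u\in B$, of the events
\[
\{u_t=u,\ S_t=B\setminus\{u\}\}.
\]
This holds because $u_t\notin S_t$, so $u_t\in B_t$ and $S_t=B_t\setminus\{u_t\}$. For each $u\in B$, the set $B\setminus\{u\}$ is an $s$-subset of $[n]\setminus\{u\}$, so
\[
\Prob\bigl(u_t=u,\ S_t=B\setminus\{u\}\bigr)=\frac1n\cdot\binom{n-1}{s}^{-1}.
\]
Summing over the $s+1$ choices of $u$ gives
\[
\Prob(B_t=B)=\frac{s+1}{n\binom{n-1}{s}}=\binom{n}{s+1}^{-1},
\]
using the identity $n\binom{n-1}{s}=(s+1)\binom{n}{s+1}$. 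This value does not depend on $B$, so $B_t$ is uniform.

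The only point needing care is the independence claim: one must observe that $B_t$ is measurable with respect to $(u_t,S_t)$ alone and not the history or Builder's strategy. There is no real obstacle here. As a remark, for $s=1$ the lemma shows that the offered pair $\{u_t,v_t\}=B_t$ is a uniform edge, independent across rounds, which is the fact used in the introduction.
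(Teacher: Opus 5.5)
Your proof is correct. Decomposing $\{B_t=B\}$ according to which element of $B$ equals $u_t$, and using $n\binom{n-1}{s}=(s+1)\binom{n}{s+1}$, gives uniformity, and independence follows because $B_t$ is a function of the pair $(u_t,S_t)$ alone. The paper states this lemma without proof (only ``for repeated references''), and your computation is the routine verification it leaves implicit.
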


\begin{lemma}[Adaptive Bernoulli domination]\label{lem:adaptive-bernoulli}
Let $X_1,\ldots,X_T$ be $\{0,1\}$-valued random variables adapted to a
filtration $(\cF_t)_{t=0}^T$.  If
\[
    \Prob(X_t=1\mid\cF_{t-1})\ge p
    \qquad\text{a.s. for every }t\le T,
\]
then $\sum_{t=1}^T X_t$ stochastically dominates
$\operatorname{Bin}(T,p)$.
\end{lemma}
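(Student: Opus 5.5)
We prove Lemma~\ref{lem:adaptive-bernoulli} by an explicit coupling with independent uniform random variables. The idea is to thin each $X_t$ down to a Bernoulli$(p)$ indicator that is independent of the past.

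Enlarge the probability space so that it carries i.i.d.\ random variables $U_1,\ldots,U_T$, uniform on $[0,1]$, independent of everything else. Put $q_t:=\Prob(X_t=1\mid\cF_{t-1})$, which is $\cF_{t-1}$-measurable and satisfies $q_t\ge p$ almost surely. Define
\[
Y_t:=X_t\cdot\mathbf 1\{U_t\le p/q_t\}
\]
on the event $q_t>0$. Since $q_t\ge p$, the case $q_t=0$ arises only when $p=0$, and then we set $Y_t:=0$.

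Clearly $Y_t\le X_t$ pointwise. Let $\mathcal G_{t}:=\sigma(\cF_{t},U_1,\dots,U_{t})$. Because $U_t$ is independent of $\cF_t\vee\sigma(U_1,\dots,U_{t-1})$, we get
\[
\Prob(Y_t=1\mid\mathcal G_{t-1})=\E\!\left[X_t\cdot \frac{p}{q_t}\,\Big|\,\mathcal G_{t-1}\right]=\frac{p}{q_t}\,\Prob(X_t=1\mid \mathcal G_{t-1}).
\]
Since $U_1,\ldots,U_{t-1}$ are independent of $\cF_t$, we have $\Prob(X_t=1\mid\mathcal G_{t-1})=q_t$. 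Hence $\Prob(Y_t=1\mid\mathcal G_{t-1})=p$ is a constant.

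It follows that $Y_t$ is independent of $\mathcal G_{t-1}$ and has the Bernoulli$(p)$ distribution. Because $Y_1,\ldots,Y_{t-1}$ are $\mathcal G_{t-1}$-measurable, induction on $t$ shows that $Y_1,\dots,Y_T$ are i.i.d.\ Bernoulli$(p)$. Therefore $\sum_t Y_t\sim\operatorname{Bin}(T,p)$, and since $\sum_t X_t\ge\sum_t Y_t$ pointwise, the claimed stochastic domination follows.

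The only delicate point is that the independence claim $\Prob(X_t=1\mid\mathcal G_{t-1})=q_t$ requires the auxiliary $U$'s to be genuinely independent of the whole filtration. This holds by construction of the product space.
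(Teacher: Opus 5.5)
Your proof is correct. The paper gives no proof of this lemma; it is stated as a standard fact ``only for repeated references,'' so there is no argument in the paper to compare against.

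Your thinning coupling is the standard argument. You keep $X_t=1$ with probability $p/q_t$ using an independent uniform $U_t$, and then show $\Prob(Y_t=1\mid\mathcal G_{t-1})=p$. This gives a pathwise statement: $\sum_t X_t\ge\sum_t Y_t$ with $\sum_t Y_t\sim\operatorname{Bin}(T,p)$. That is stronger than the distributional claim, and it is what the paper uses when it combines the lemma with Chernoff's inequality.

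One minor technicality should be stated. $q_t$ is defined only up to null sets, so fix an $\cF_{t-1}$-measurable version taking values in $[p,1]$ everywhere. This is possible because $\{q_t<p\}\in\cF_{t-1}$ is null. Then $p/q_t\in[0,1]$ is well defined, and the step $\E\bigl[\mathbf 1\{U_t\le p/q_t\}\mid\cF_t\vee\sigma(U_1,\dots,U_{t-1})\bigr]=p/q_t$ is justified, since $U_t$ is independent of that $\sigma$-algebra.

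An alternative route avoids enlarging the probability space: backward induction on $t$. You show $\Prob\bigl(\sum_{i>t}X_i\ge k\mid\cF_t\bigr)\ge\Prob(\operatorname{Bin}(T-t,p)\ge k)$, using that $k\mapsto\Prob(\operatorname{Bin}(j,p)\ge k)$ is nonincreasing. That argument yields only the distributional inequality, which is all the lemma asserts.
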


We also use the following standard Chernoff's inequality. The binomial
case is given in \cite[Theorem~2.1]{JLR2000}; the same
exponential-moment argument applies to sums of independent Bernoulli random variables with possibly different success probabilities, as well as to the Poisson case.

\begin{lemma}[Chernoff's inequality]\label{lem:chernoff}
Let $X$ be either a sum of independent Bernoulli random variables or Poisson random variables, and put
$\mu:=\E X$. Then, for every $t>0$,
\[
\Prob(|X-\mu|\ge t)
\le 2\exp\left(-\frac{t^2}{2(\mu+t/3)}\right).
\]
\end{lemma}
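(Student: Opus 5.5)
The statement to prove is Lemma~\ref{lem:chernoff}, the two-sided Chernoff bound for $X$ a sum of independent Bernoulli variables or a Poisson variable. I would prove it with the exponential-moment method, bounding the upper and lower tails separately and then adding them, which accounts for the factor $2$.

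\textbf{Moment generating function.} First I would show that in both cases
\[
\E e^{\lambda X}\le \exp\bigl(\mu(e^\lambda-1)\bigr)\qquad\text{for all real }\lambda .
\]
For a single Bernoulli$(p_i)$ variable this follows from $1+p_i(e^\lambda-1)\le \exp(p_i(e^\lambda-1))$, and independence lets us multiply the factors. For a Poisson$(\mu)$ variable the bound holds with equality. If $X$ is a sum of independent Poisson variables, their sum is again Poisson, so the same bound applies.

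\textbf{Upper tail.} By Markov's inequality, $\Prob(X\ge\mu+t)\le \exp\bigl(\mu(e^\lambda-1)-\lambda(\mu+t)\bigr)$. Choosing $\lambda=\log(1+t/\mu)$ gives
\[
\Prob(X\ge\mu+t)\le \exp\bigl(-\mu\,\varphi(t/\mu)\bigr),\qquad \varphi(x):=(1+x)\log(1+x)-x .
\]

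\textbf{Lower tail.} Applying the same argument with $\lambda<0$ gives $\Prob(X\le\mu-t)\le\exp(-\mu\,\varphi(-t/\mu))$ for $0<t\le\mu$. For $t>\mu$ this probability is $0$, since $X\ge 0$.

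\textbf{Elementary inequalities.} It remains to show
\[
\varphi(x)\ge \frac{x^2}{2(1+x/3)}\quad (x\ge0),\qquad \varphi(-x)\ge \frac{x^2}{2}\quad (0\le x\le 1).
\]
The second bound is at least as strong as the first, so substituting $x=t/\mu$ turns both tail bounds into $\exp\bigl(-t^2/(2(\mu+t/3))\bigr)$.

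For the second inequality, set $g(x)=\varphi(-x)-x^2/2$. Then $g(0)=g'(0)=0$ and $g''(x)=\frac{1}{1-x}-1\ge 0$, so $g\ge0$.

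For the first inequality, set $h(x)=\varphi(x)-\frac{x^2}{2(1+x/3)}$. Again $h(0)=h'(0)=0$, and the plan is to show $h''\ge0$. We have $\varphi''(x)=\frac{1}{1+x}$, and a direct computation gives $\frac{d^2}{dx^2}\frac{x^2}{2+2x/3}=\frac{27}{(3+x)^3}$. So it suffices to verify $(3+x)^3\ge 27(1+x)$, which holds because the left side expands to $27+27x+9x^2+x^3$.

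This calculus verification of $h''\ge 0$ is the only step that needs any care. Combining the two one-sided bounds gives the claimed inequality.
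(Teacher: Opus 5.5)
Your proof is correct and is essentially the paper's route: the paper gives no proof of its own, citing Theorem~2.1 of Janson, \L{}uczak and Ruci\'nski and noting that the same exponential-moment argument extends. Your argument is that one, down to the comparison $\varphi''(x)=(1+x)^{-1}\ge(1+x/3)^{-3}$ for the upper-tail inequality; the only points you leave implicit are trivial: the case $\mu=0$ (then $X=0$ almost surely), and the endpoint $t=\mu$ of the lower tail, where $\lambda=\log(1-t/\mu)$ is undefined and one instead uses $\Prob(X=0)\le e^{-\mu}=e^{-\mu\varphi(-1)}$ directly (or lets $\lambda\to-\infty$).
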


\begin{corollary}[From constant probability to whp]\label{cor:From_constant_probability_to_whp}
Let $\mathcal P$ be an increasing property.  If Builder can construct
$\mathcal P$ within $O(m)$ rounds with probability bounded away from
zero, then, for every function $\omega(n)\to\infty$, Builder can
construct $\mathcal P$ within $\omega(n)m$ rounds with high
probability.
\end{corollary}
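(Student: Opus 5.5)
The plan is the standard restart (boosting) argument, made to work for adaptive strategies by splitting time into independent blocks. By hypothesis there are a constant $C$, a constant $c>0$, and a Builder strategy $\mathcal A$ such that, started from the empty graph, $\Prob_{\mathcal A}(G^s_{Cm}\in\mathcal P)\ge c$ for all large $n$. Given $\omega(n)\to\infty$, put $L:=\lfloor \omega(n)/C\rfloor$, so that $LCm\le\omega(n)m$ and $L\to\infty$. We split the first $LCm$ rounds into $L$ consecutive blocks of $Cm$ rounds each. In block $i$, Builder runs a fresh copy of $\mathcal A$. This copy sees only the offers $(u_t,S_t)$ of that block, and it acts as if the graph consisted only of the edges added during block $i$. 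Every such simulated choice is legal, because the only constraint on Builder is $v_t\in S_t$.

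The first step is to argue that the blocks behave independently. The pairs $(u_t,S_t)$ are independent across rounds, and the strategy in block $i$ depends only on the offers of block $i$ (plus, if $\mathcal A$ is randomized, fresh independent randomness). Hence the graphs $H_1,\dots,H_L$ built inside the individual blocks are independent. Each $H_i$ has exactly the law of $G^s_{Cm}$ under $\mathcal A$, so $\Prob(H_i\in\mathcal P)\ge c$ for each $i$.

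The second step uses monotonicity. The real graph after $LCm$ rounds contains every $H_i$, after passing to the underlying simple graph. Since $\mathcal P$ is increasing, it fails only if every $H_i$ fails. Therefore
\[
\Prob(G^s_{LCm}\notin\mathcal P)\le (1-c)^L\le e^{-cL}\to0 .
\]
Any remaining rounds up to $\omega(n)m$ do no harm, since Builder may skip them (or add arbitrary edges) and the property is increasing.

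The only point needing care is the first step: the hypothesis concerns the process started from the empty graph, whereas block $i$ begins with a nonempty graph. We avoid this by never letting the later copies react to the earlier edges; they simulate the empty-start process exactly, and monotonicity absorbs the extra edges. A minor technicality is making ``constant probability'' uniform in $n$, meaning $\liminf_n$ of the success probability is positive. We fix $c$ as half this $\liminf$ and take $n$ large.
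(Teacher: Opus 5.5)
Your proposal is correct and is the standard block-restart argument. The paper states this corollary without proof, but its placement right after Lemma~\ref{lem:adaptive-bernoulli} points to the same idea: $\lfloor\omega(n)/C\rfloor$ blocks of $Cm$ rounds, each succeeding with conditional probability at least $c$, so that failure has probability at most $(1-c)^{\lfloor\omega(n)/C\rfloor}=o(1)$. The only difference is that you let each block ignore earlier edges, which makes the blocks exactly independent and removes the need for the domination lemma, with monotonicity absorbing the extra edges.
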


Now suppose that in every round Builder chooses $v_t$ uniformly from $S_t$. Then for each round $t$ and for every
$x\ne y$,
\begin{equation}
\Prob\bigl(\{u_t,v_t\}=\{x,y\}\bigr)
=
\Prob(u_t=x,\ v_t=y)+\Prob(u_t=y,\ v_t=x)
=
\frac{2}{n(n-1)}
=
\frac{1}{\binom{n}{2}}.
\label{eq:random-edge}
\end{equation}
Thus the strategy produces independent
uniformly random edges of $K_n$. 

The next lemma together with the trivial
$\Omega(n)$ lower bound, gives the lower bound of order
\begin{equation}
n+\frac{n\log n}{(s+1)}
\label{eq:tau-omega-n}
\end{equation}
for hitting time for properties implying minimum degree to be at least one. In particular, it applies to connectivity, fixed minimum degree, Hamiltonicity, and the existence of a perfect matching.

\begin{lemma}[Unseen vertex]\label{lem:unseen-vertex}
Assume $s=o(n)$, and fix $\varepsilon>0$.  If $m$ equals
\[
    (1-\varepsilon)\frac{n\log n}{s+1},
\]
then whp, independently of Builder's strategy, for all $i\leq m$, there is a vertex that
has never appeared either as $u_i$ or in the offered set $S_i$, and hence
remains isolated.
\end{lemma}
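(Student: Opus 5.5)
The plan is a second-moment argument on the number of \emph{unseen} vertices. By Lemma~\ref{offer-distribution}, the sets $B_t=\{u_t\}\cup S_t$ are independent and uniform in $\binom{[n]}{s+1}$. Their law does not depend on Builder at all, since Builder only chooses $v_t$ inside $B_t$. Call a vertex $x$ \emph{unseen at time $i$} if $x\notin B_1\cup\dots\cup B_i$. Such a vertex has never been an endpoint of any added edge, so it is isolated in $G_i^s$ whatever strategy is used. The event ``some vertex is unseen at time $m$'' is contained in the event ``some vertex is unseen at every time $i\le m$'', because the unseen sets are decreasing in $i$. It therefore suffices to show that, whp, $X:=\#\{x: x\notin B_1\cup\dots\cup B_m\}$ is positive.

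\textbf{First moment.} For a fixed $x$ we have $\Prob(x\notin B_t)=1-\frac{s+1}{n}$. Hence
\[
\E X = n\Bigl(1-\frac{s+1}{n}\Bigr)^m .
\]
Since $s=o(n)$, we can write $\log(1-\frac{s+1}{n})=-\frac{s+1}{n}(1+O(\frac{s+1}{n}))$. Plugging in $m=(1-\varepsilon)\frac{n\log n}{s+1}$ gives
\[
\E X = n\cdot n^{-(1-\varepsilon)(1+o(1))}=n^{\varepsilon-o(1)}\to\infty .
\]
The main care needed here is that the error term is $m\cdot O(((s+1)/n)^2)=O(\frac{(s+1)\log n}{n})=o(\log n)$. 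This uses $s=o(n)$ and is exactly where that hypothesis enters.

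\textbf{Second moment.} For $x\ne y$,
\[
\Prob(x,y\notin B_t)=\frac{\binom{n-2}{s+1}}{\binom{n}{s+1}}=\frac{(n-s-1)(n-s-2)}{n(n-1)}\le\Bigl(1-\frac{s+1}{n}\Bigr)^2 .
\]
The inequality holds because $\frac{n-s-2}{n-1}\le\frac{n-s-1}{n}$; in other words, the events $x\notin B_t$ and $y\notin B_t$ are negatively correlated. By independence across rounds, $\Prob(x,y \text{ unseen})\le\Prob(x\text{ unseen})\Prob(y\text{ unseen})$. Therefore
\[
\mathrm{Var}\,X\le \E X+\sum_{x\ne y}\bigl(\Prob(x,y\text{ unseen})-\Prob(x)\Prob(y)\bigr)\le \E X .
\]
Chebyshev's inequality then yields
\[
\Prob(X=0)\le \frac{\mathrm{Var}\,X}{(\E X)^2}\le\frac{1}{\E X}\to0 .
\]

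So whp some vertex lies outside every $B_t$ with $t\le m$. This vertex is isolated in every $G_i^s$ with $i\le m$, independently of Builder's strategy. I expect no real obstacle: the only delicate point is the asymptotic estimate of $(1-\frac{s+1}{n})^m$, which needs $s=o(n)$. Without it, for $s$ a constant fraction of $n$, the exponent would change by a constant factor and the lemma as stated would fail.
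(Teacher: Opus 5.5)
Your proposal is correct and follows the paper's proof essentially step for step. You compute the first moment $n(1-\frac{s+1}{n})^m=n^{\varepsilon-o(1)}$, using $s=o(n)$ to absorb the $O((s+1)^2m/n^2)$ error; you get pairwise negative correlation from $\binom{n-2}{s+1}/\binom{n}{s+1}\le(1-\frac{s+1}{n})^2$, hence $\operatorname{Var}X\le\E X$; and you finish with Chebyshev. Your explicit remarks that the law of the $B_t$ does not depend on Builder's strategy, and that the unseen sets shrink in $i$ (so one vertex works for all $i\le m$), make precise points the paper leaves implicit.
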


\begin{proof}
Call $x\in[n]$ \emph{unseen} if $x\notin B_t$ for every $t\le m$, and
let $Z$ count the number of unseen vertices. Put $k:=s+1$. 
By Lemma~\ref{offer-distribution} and choice of $m$,
\[
    \E Z=n\left(1-\frac{k}{n}\right)^m.
\]
Since $k=s+1=o(n)$,
\[
\begin{aligned}
 \log\E Z
 &=\log n-\frac{km}{n}
   +O\left(\frac{k^2m}{n^2}\right)\\
 &\ge \varepsilon\log n-o(\log n),
\end{aligned}
\]
and hence $\E Z\to\infty$.

For distinct $x,y$, one round avoids both vertices with probability
\[
 \frac{\binom{n-2}{k}}{\binom nk}
 =\frac{(n-k)(n-k-1)}{n(n-1)}
 \le\left(1-\frac{k}{n}\right)^2.
\]
After taking the $m$-th power, the indicators that $x$ and $y$ are
unseen, are pairwise negatively correlated.  Consequently,
\[
    \operatorname{Var}Z\le\E Z.
\]
Hence, applying Chebyshev's inequality 
\[
    \Prob(Z=0)
    \le\frac{\operatorname{Var}Z}{(\E Z)^2}
    \le\frac1{\E Z}
    =o(1).
\]
\end{proof}

\section{Basic spanning properties}

In (\ref{eq:tau-omega-n}) we discussed the lower bounds. It remains to prove the corresponding upper bounds.

\subsection{Connectivity}

\begin{proof2}
 During the first $C_\delta n$ rounds, Builder chooses $v_t$ uniformly from $S_t$. By \eqref{eq:random-edge}, these rounds generate independent uniformly random edges of $K_n$. Taking $\delta=1/2$, the standard giant-component theorem \cite{JLR2000} implies that, for a sufficiently large constant $C_\delta$, the resulting graph contains a component of order at least $n/2$ whp. We take this component as the initial component $C$.

Builder then switches to the greedy attachment strategy below.
Let $C=C_t$ be the current component, put $R=R_t=[n]\setminus C$, $r=|R|$. Builder chooses an edge between $C$ and $R$ whenever $B_t=\{u_t\}\cup S_t$ intersects both $C$ and $R$. The probability $p_r$ of such a round equals

\begin{equation}
 \frac{r}{n}
 \left(1-\frac{\binom{r-1}{s}}{\binom{n-1}{s}}\right)
 +\frac{n-r}{n}
 \left(1-\frac{\binom{n-r-1}{s}}{\binom{n-1}{s}}\right),
\label{eq:pr}
\end{equation}
where $\binom{a}{s}=0$ for $a<s$.

The first bracket in
\eqref{eq:pr} is at least $1/2$, and hence the first term is at least $r/(2n)$. For the second bracket, we use (\ref{eq:standart_exp_bound}).
Since also $(n-r)/n\ge 1/2$, the second term is at least $\frac14\min\{1,sr/n\}$. Therefore
\begin{equation}
     p_r
        \ge
        \frac14\left(\frac{r}{n}+\min\left\{1,\frac{sr}{n}\right\}\right)
        \ge
        \frac14\min\left\{1,\frac{(s+1)r}{n}\right\}.
    \label{eq:pr_bound1}
\end{equation}

\smallskip
\noindent\emph{The regime $s=o(\log n)$.}
Set
$
    M:=\frac{n\log n}{s+1},
    \qquad
    r_0:=\left\lceil\frac{n}{(s+1)\log\log n}\right\rceil,
    \qquad
    q_r=\frac14\min\left\{1,\frac{(s+1)r}{n}\right\}.
$
Here $M=\omega(n)$.  From
(\ref{eq:pr_bound1}), the expected time to reduce $r$ from $n/2$ to $r_0$ is at most

\[
\begin{aligned}
\sum_{r=r_0+1}^{\lceil n/2\rceil} \frac{1}{q_r}
\le
4\sum_{r=\lceil n/(s+1)\rceil}^{\lceil n/2\rceil}1
+
\frac{4n}{(s+1)}
\sum_{r=r_0+1}^{\lceil n/(s+1)\rceil}\frac1r=
O(n)+O\left(\frac{n}{(s+1)}\log\frac{n/(s+1)}{r_0}\right)
=o(M).
\end{aligned}
\]
Therefore, by
Markov's inequality, this stage takes $o(M)$ rounds whp. 

It remains to estimate time for connecting the last $r_0$ vertices.
For this goal we need to estimate $p_r$ more accurately for $1\le r\le r_0$.
Uniformly in this range,
\[
    \frac{\binom{r-1}{s}}{\binom{n-1}{s}}
    \le\left(\frac{r}{n-1}\right)^s=o(1),
\]
so the first term in \eqref{eq:pr} is $(1+o(1))r/n$.  Conditional
on $u_t\in C$, let $X=|S_t\cap R|$. Since, $X$ is non-negative integer random variable, we have that $\mathbf 1_{\{X\ge1\}}\ge X-\binom X2.$ Taking the expectation from both sides,
\[\Prob(X\ge1)\geq  \E X- \E\binom X2.\]
Calculate
\[
 \E X=\frac{sr}{n-1},
 \qquad
 \E\binom X2
 =\binom r2\frac{s(s-1)}{(n-1)(n-2)}
 =O\left(\frac{s^2r^2}{n^2}\right).
\]
Since $sr/n\le1/\log\log n$ and $\Prob(X\ge1)\leq  \E X$, 
\[
    \Prob(X\ge1)=(1+o(1))\frac{sr}{n}
\]
uniformly for $1\le r\le r_0$.  Therefore
\begin{equation}
    p_r=(1+o(1))\frac{(s+1)r}{n},
    \qquad 1\le r\le r_0.
    \label{eq:pr_bound2}
\end{equation}

Since a crossing round decreases $R_t$ by at least one with probability $p_r$, whereas in
any other round $R_t$ remains unchanged,
\[
\E(|R_{t+1}|\mid |R_t|=r)\le r(1-p_r)+(r-1)p_r=r-p_r
\le r\left(1-(1-o(1))\frac{s+1}{n}\right).
\]
Averaging over all possible values of size of $R_t$, we obtain
\[
\E |R_{t+1}|\leq
\left(1-(1-o(1))\frac{s+1}{n}\right)\E |R_t|.
\]
For
$
    T=(1+\varepsilon/2)\frac{n\log n}{s+1},
$
where $\varepsilon>0$ is arbitrary, iterating the above estimate gives
\[
 \E |R_T|
 \le r_0\left(1-(1+o(1))\frac{(s+1)}{n}\right)^T
 =o(1).
\]
Thus $R_T=\emptyset$ whp and graph is connected after $(1+\varepsilon)M.$ By Lemma \ref{lem:unseen-vertex} and since $\varepsilon$
was arbitrary, $\tau(conn, s)=(1+o(1))\frac{n\log n}{s+1}$.

\smallskip
\noindent\emph{The regime $s=\Omega(\log n)$.}
Assume $s+1\ge c_1\log n$ for a fixed $c_1>0$.  While $r\ge n/(s+1)$,
\eqref{eq:pr_bound1} gives $p_r\ge \frac{1}{4}$.  Lemma~\ref{lem:adaptive-bernoulli}
and Chernoff's inequality \ref{lem:chernoff} show that $O(n)$ further rounds reduce $r$ to at most
$n/(s+1)$ whp.

Condition on a graph with $|R_0|\le n/(s+1)$ and restart the count.
For every $r\le n/(s+1)$, \eqref{eq:pr_bound1} gives
\[
\E(|R_{t+1}|\mid |R_t|=r)\le r-p_r
\le r\left(1-\frac{s+1}{4n}\right).
\]
Averaging over all possible values of size of $R_t$, we obtain
\[
\E |R_{t+1}|\le\left(1-\frac{s+1}{4n}\right)\E |R_t|.
\]
Therefore, for $T:=\lceil An\rceil$,
\[
\E |R_T|\le |R_0|\left(1-\frac{s+1}{4n}\right)^T
\le\frac{n}{s+1}e^{-A(s+1)/4}
\le n^{1-Ac_1/4}=o(1),
\]
provided $A>4/c_1$. By Markov's inequality, $R_T=\emptyset$ with high
probability, and hence the graph is connected after $O(n)$ rounds.

\end{proof2}

\subsection{Fixed minimum degree}\label{section:Fixed_min_degree}

\begin{proof2}
Let $G_t$ be the graph after $t$ rounds, and put
$R_t:=\{v\in[n]:d_{G_t}(v)<d\}$ and $r_t:=|R_t|$. For $u\in[n]$,
let $A_t(u):=R_t\setminus(N_{G_t}(u)\cup\{u\})$. In round $t+1$,
Builder chooses $v_{t+1}\in S_{t+1}\cap A_t(u_{t+1})$ whenever this
set is nonempty, and chooses arbitrarily otherwise.

For every fixed $G_t$,
\[
\sum_{u\in[n]}|R_t\setminus A_t(u)|
=\sum_{v\in R_t}\bigl(d_{G_t}(v)+1\bigr)\le dr_t.
\]
Hence, if $r_t>0$, at most $2d$ vertices $u$ satisfy
$|A_t(u)|<r_t/2$. Since $u_{t+1}$ is uniform on $[n]$, using
\eqref{eq:hit} we obtain
\[
\Prob\bigl(S_{t+1}\cap A_t(u_{t+1})\ne\varnothing\bigr)
\ge\left(1-\frac{2d}{n}\right)
\left(1-e^{-sr_t/(2(n-1))}\right)
\ge c\min\left\{1,\frac{sr_t}{n}\right\}.
\]

Define $\Phi_t:=\sum_{v\in[n]}(d-d_{G_t}(v))_+$. Then
$\Phi_t\le dr_t\le dn$, and
$S_{t+1}\cap A_t(u_{t+1})\ne\varnothing$ implies
$\Phi_{t+1}\le\Phi_t-1$. Consequently, for every $k$,
\begin{equation}
\Prob(\Phi_{t+1}\le k-1\mid\Phi_t=k)
\ge c_d\min\left\{1,\frac{sk}{n}\right\}.
\label{eq:min-degree-decrease}
\end{equation}

While $\Phi_t\ge n/s$, the probability in
\eqref{eq:min-degree-decrease} is at least $c_d$. Since
$\Phi_0\le dn$, at most $dn$ such decreases are required.
Lemma~\ref{lem:adaptive-bernoulli} and Chernoff's inequality \ref{lem:chernoff} therefore imply
that, for $t_0:=\lceil C_dn\rceil$, we have
$\Phi_{t_0}<n/s$ whp.

Condition on such a graph and restart the count. Since $\Phi_t$ is
non-increasing, for every $k<n/s$,
\[
\E(\Phi_{t+1}\mid\Phi_t=k)
\le k-\Prob(\Phi_{t+1}\le k-1\mid\Phi_t=k)
\le k\left(1-\frac{c_ds}{n}\right).
\]
Averaging over all possible values of $\Phi_t$, we obtain
\[
\E\Phi_{t+1}
=\sum_k\E(\Phi_{t+1}\mid\Phi_t=k)\Prob(\Phi_t=k)
\le\left(1-\frac{c_ds}{n}\right)\sum_k k\Prob(\Phi_t=k)
=\left(1-\frac{c_ds}{n}\right)\E\Phi_t.
\]
Therefore, for $T:=\lceil C_dn(\log n)/s\rceil$,
\[
\E\Phi_T
\le\Phi_0\left(1-\frac{c_ds}{n}\right)^T
\le\frac ns e^{-c_dC_d\log n}
=o(1).
\]
By Markov's inequality, $\Phi_T=0$ whp. Thus Builder
achieves minimum degree at least $d$ within
$O_d(n+n\log n/s)$ rounds.
\end{proof2}

\subsection*{A random bounded-out subgraph}

A \(k\)-out graph is a random graph  on $[n]$,
in which each vertex independently chooses $k$ distinct other vertices uniformly at random. We consider the following bipartite variant of the \(k\)-out model. Let $A\cup B=[n]$ be a fixed partition with
$\bigl||A|-|B|\bigr|\le1$.  For a fixed integer $d$, let
$\mathcal D_d(A,B)$ be the following random bipartite graph.  Every vertex
of $A$ independently chooses $d$ vertices of $B$, uniformly with
replacement, and every vertex of $B$ independently chooses $d$ vertices
of $A$, also uniformly with replacement.  We retain the underlying simple
graph of all chosen edges.

\begin{lemma}[Construction of $\mathcal D_d(A,B)$]
\label{lem:construct-d-out}
For every fixed $d$, Builder can construct a subgraph with distribution
$\mathcal D_d(A,B)$ within
\[
    O_d\left(n+\frac{n\log n}{s}\right)
\]
rounds whp.
\end{lemma}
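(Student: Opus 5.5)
Every vertex $x$ has $d$ ``demands'': $x$ must receive $d$ independent uniform partners from the opposite side, $B$ if $x\in A$ and $A$ if $x\in B$. Builder generates these partners one round at a time. The key observation is that the candidate set $S_t$ is a uniform $s$-subset of $[n]\setminus\{u_t\}$. Hence, conditionally on $S_t$ containing at least one vertex of the opposite side, a uniformly chosen element of $S_t$ from that side is a uniform vertex of that side. The exchangeability of $S_t$ makes $S_t\cap B$, given its size, a uniform subset of $B$.

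\textbf{Strategy.} In round $t$, with $u=u_t$, Builder checks whether $u$ still has an unfulfilled demand and whether $S_t$ meets the opposite side of $u$. If both hold, Builder chooses $v_t$ uniformly from $S_t\cap B$ (or $S_t\cap A$) and records it as the next choice of $u$. Otherwise the round is skipped. Since the rounds are independent, the recorded choices are i.i.d.\ uniform on the correct side and independent across vertices, with replacement. So once all $dn$ demands are met, the recorded edges form exactly $\mathcal D_d(A,B)$. We only need the final graph to contain this subgraph, and extra or skipped rounds are harmless because the property is increasing.

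\textbf{Timing.} Whenever $S_t$ hits the opposite side, that side has about $n/2$ vertices, so by \eqref{eq:hit} this happens with probability at least a constant $c>0$ for every $s\ge1$. Let $R_t$ be the set of vertices with unmet demands and $\Phi_t$ the total number of unmet demands, so $\Phi_0=dn$. In each round, $\Phi$ decreases with probability at least $c\,|R_t|/n\ge c\,\Phi_t/(dn)$. We then iterate in expectation as in Section~\ref{section:Fixed_min_degree}: $\E\Phi_{t+1}\le(1-c/(dn))\E\Phi_t$. This gives $\Phi_T=0$ whp for $T=O_d(n\log n)$. That bound is sufficient only when $s=O(1)$.

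\textbf{Main obstacle: larger $s$.} The strategy above uses only $u_t$ as the vertex being served, so it is a coupon collector over $u_t$ and costs $n\log n$ regardless of $s$. To gain the factor $s$, Builder must also serve vertices appearing in $S_t$. The fix is to let Builder serve a demand of some $x\in S_t\cap R_t$ when $u_t$ lies on the opposite side of $x$, recording $u_t$ as the partner of $x$. Since $u_t$ is uniform on $[n]$, its conditional distribution given that it lies on that side is uniform there. The delicate point is that the choice of $x$ must not bias the partner $u_t$. I would guarantee this by having Builder select $x$ among the deficient vertices of $S_t$ by a rule independent of $u_t$, fixing the side first via an independent coin. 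I would then verify that, given the history and the event that $x$ is served, the partner is uniform on the opposite side and independent of everything else.

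With this, when $\Phi_t=k$ the probability that $\Phi$ decreases is at least $c_d\min\{1,sk/n\}$ by \eqref{eq:hit} applied to $A=R_t$. The two-phase argument of Section~\ref{section:Fixed_min_degree} then applies verbatim: $O_d(n)$ rounds bring $\Phi$ below $n/s$ via Lemma~\ref{lem:adaptive-bernoulli} and Chernoff, and then $O_d(n\log n/s)$ further rounds bring it to $0$ by the expectation contraction and Markov. The total is $O_d(n+n\log n/s)$ rounds whp.
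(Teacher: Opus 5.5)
Your final strategy is the paper's: serve a deficient vertex $x\in S_t$ on the side opposite to $u_t$ and record $u_t$ as its partner, with $x$ selected by a rule that looks only at $S_t\cap A$ (resp.\ $S_t\cap B$). Given the side of $u_t$, that set is independent of the identity of $u_t$, which is the precise form of your ``rule independent of $u_t$''; you then run the same two-phase argument as for minimum degree. The only difference is that the paper generates the choices of $A$ and of $B$ in two consecutive phases with fresh rounds, rather than simultaneously with a coin. Your coin is harmless but redundant, since the side of $x$ is already forced by the side of $u_t$.
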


\begin{proof}
We first generate the $d$ choices made by the vertices of $A$.  For
$a\in A$, let $\ell(a)$ be the number of choices already assigned to
$a$, and put
\[
    D_A:=\{a\in A:\ell(a)<d\}.
\]
In a round with $u_t\in B$, if $S_t\cap D_A\ne\varnothing$, choose a
target $v_t\in S_t\cap D_A$ according to a fixed rule depending only on
$S_t\cap D_A$, record $u_t$ as the next choice made by $v_t$, and
increase $\ell(v_t)$ by one.  All other rounds are ignored for the
recorded subgraph. The rest part of the proof repeats ideas and calculations which we used in Section \ref{section:Fixed_min_degree}. 

Let $r=|D_A|$ and let
\[
    \Phi_A:=\sum_{a\in A}(d-\ell(a)).
\]
Since $r\ge\Phi_A/d$, the conditional success probability in a round is
\[
\begin{aligned}
 \frac{|B|}{n}
 \left(1-\frac{\binom{n-1-r}{s}}{\binom{n-1}{s}}\right)
 &\ge c_3\min\left\{1,\frac{sr}{n}\right\}\\
 &\ge c_3\min\left\{1,\frac{s\Phi_A}{dn}\right\}
\end{aligned}
\]
for an absolute constant $c_3>0$.  The same two-phase argument as in the
Section \ref{section:Fixed_min_degree} shows that $\Phi_A$ reaches zero in
\[
    O_d\left(n+\frac{n\log n}{s}\right)
\]
rounds whp.

It remains to identify the distribution of the recorded choices.  Conditional
on $u_t=b\in B$, the distribution of $S_t\cap A$, and hence the selected
target, does not depend on the identity of $b$.  Therefore, for each $v\in A$ all recorded choices are independent and uniform on $B$.
Every target $a\in A$ occurs exactly $d$ times, so this gives precisely
the desired distribution.

Using fresh rounds, interchange $A$ and $B$ and repeat the construction.
The second family of choices is independent of the first, completing the
construction.
\end{proof}

The next lemma states that $H$ is whp
connected and has an expansion property.
The expansion estimate is obtained by adapting the
first-moment argument in the proof of
\cite[Lemma~18.17, p.~393]{FriezeKaronski}
to our bipartite model.
Connectivity is obtained by adapting the argument in the proof of
\cite[Theorem~18.2, p.~380]{FriezeKaronski}.

\begin{lemma}
\label{expanders}
Let $d=100$ and $\alpha=1/20$. If
$H\sim\mathcal D_d(A,B)$, then whp
\[
    |N_H(X)\setminus X|\ge 2|X|
    \qquad
    \text{for every }X\subseteq A\cup B,\quad
    1\le |X|\le \alpha n,
\]
and $H$ is connected.
\end{lemma}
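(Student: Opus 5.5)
We prove both assertions by a first-moment argument over small sets, in the spirit of \cite[Lemma~18.17]{FriezeKaronski}. The only randomness is the $dn$ independent uniform choices (each $a\in A$ makes $d$ choices in $B$, each $b\in B$ makes $d$ choices in $A$). The key observation is that if the expansion fails for some $X$, then every choice made by a vertex of $X$ lands in the set $Y:=X\cup N_H(X)$, which has size less than $3|X|$.

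\emph{Expansion.} Suppose $|N_H(X)\setminus X|<2|X|$ for some $X$ with $|X|=x\le\alpha n$. Write $X_A=X\cap A$ and $X_B=X\cap B$, and enlarge $N_H(X)\setminus X$ to a set $W$ of size exactly $2x$. Each of the $d|X_A|$ choices of vertices in $X_A$ lies in $(X\cup W)\cap B$, a set of size at most $3x$. Likewise each of the $d|X_B|$ choices of vertices in $X_B$ lies in a set of size at most $3x$ inside $A$. Since $|A|,|B|\ge (n-1)/2$, each choice lands there with probability at most $3x/((n-1)/2)\le 7x/n$. By independence, the failure probability is at most
\[
\sum_{x=1}^{\alpha n}\binom{n}{x}\binom{n}{2x}\left(\frac{7x}{n}\right)^{dx}
\le\sum_{x=1}^{\alpha n}\left[\frac{en}{x}\left(\frac{en}{2x}\right)^{2}\left(\frac{7x}{n}\right)^{d}\right]^{x}
=\sum_{x}\left[C\left(\frac{x}{n}\right)^{d-3}\right]^{x},
\]
where $C=e^3 7^d/4$. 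We split the sum into two ranges.

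\begin{itemize}
\item For $x\le\sqrt n$, the bracket is $O(n^{-(d-3)/2})$, so this range contributes $o(1)$.
\item For $\sqrt n<x\le\alpha n$, the bracket is at most $C\alpha^{d-3}=e^3 7^{100}20^{-97}/4$, which is tiny. The sum is then at most a geometric series in $x$ starting at $x=\sqrt n$, so it is also $o(1)$.
\end{itemize}

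The one mild subtlety is that $N_H(X)$ also receives edges chosen by vertices outside $X$. This only helps expansion: the event that all choices of $X$ land in $X\cup W$ is necessary for failure, and we bound only that event.

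\emph{Connectivity.} Suppose $H$ is disconnected and let $K$ be a smallest component, so $|K|\le n/2$. If $|K|\le\alpha n$, then $N_H(K)\setminus K=\varnothing$, which contradicts the expansion just proved. So it remains to rule out a component $K$ with $\alpha n<|K|\le n/2$.

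\begin{itemize}
\item \emph{Balance of $K$.} Every vertex of $K_A=K\cap A$ sends all its choices into $K_B=K\cap B$, and vice versa. Let $k_A=|K_A|$ and $k_B=|K_B|$. If $K_B=\varnothing$, then $K_A=\varnothing$ as well, so both parts are nonempty.
\item \emph{First moment.} The probability that a given pair $(K_A,K_B)$ is closed in this sense is
\[
\left(\frac{k_B}{|B|}\right)^{dk_A}\left(\frac{k_A}{|A|}\right)^{dk_B}.
\]
Union-bound this over all pairs with $k_A+k_B=k\in(\alpha n,n/2]$, against $\binom{|A|}{k_A}\binom{|B|}{k_B}\le 2^{n}$.
\item \emph{Case split.} If $\min(k_A,k_B)\ge \alpha n/4$, say, then both ratios are at most about $1-\alpha/4$ while the exponents total $dk\ge d\alpha n=5n$. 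Since we have $\log 2^n=0.69n$, this gives an $e^{-\Omega(n)}$ bound provided $5\log(1/(1-\alpha/4))$ beats it. This constant comparison is not obviously in our favour, so we fall back on the sharper ratio bound $(k_B/|B|)^{k_A}(k_A/|A|)^{k_B}\le(k/n)^{k}$-type estimates, using $k_A/|A|+k_B/|B|\approx 2k/n\le1$. If instead one side is small, say $k_B<\alpha n/4$, then $(k_B/|B|)^{dk_A}$ with $k_A>3\alpha n/4$ is astronomically small.
\end{itemize}

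I expect the main obstacle to be this middle range $\alpha n<k\le n/2$. There the entropy $\binom{n}{k}$ is exponential in $n$ and must be beaten by $(k/n)^{dk}$-type factors with $d=100$ and $k/n\le 1/2$. We get $2^{-100k}$ against $2^{n}$ with $k>n/20$, i.e.\ exponent $5n>n$, so it works with room to spare once the bound is written as $\prod(k_B/|B|)^{dk_A}(k_A/|A|)^{dk_B}\le (\max)^{dk}$ with $\max\le 1/2+o(1)$ in the balanced case. The real care needed is the balanced-versus-unbalanced split.
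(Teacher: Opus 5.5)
\textbf{The approach is right, but one step of the connectivity estimate fails as written.} The paper gives no argument of its own for this lemma; it only points to the first-moment proofs of Frieze--Karo\'nski (Lemma~18.17 and Theorem~18.2). Your route is that same first-moment approach. Your expansion half is correct and complete: every one of the $dx$ choices made by $X$ must land in the fixed set $X\cup W$ of size $3x$, and this gives the bound $\bigl[C(x/n)^{d-3}\bigr]^x$, whose sum over both ranges of $x$ is $o(1)$. For connectivity, your reduction is also correct: expansion rules out components of size at most $\alpha n$, so it suffices to rule out a closed pair $(K_A,K_B)$ with $\alpha n<k\le n/2$.

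\textbf{The failing step.} Your ``balanced'' case is $\min(k_A,k_B)\ge\alpha n/4$. In that case it is false that $\max(k_A/|A|,k_B/|B|)\le 1/2+o(1)$. For example, $k_A=\alpha n/4$ and $k_B=n/2-\alpha n/4$ give $k_B/|B|\approx 1-\alpha/2$. So the bound $(\max)^{dk}$ yields nothing there, and the $1-\alpha/4$ bound already failed, as you noticed. The $(k/n)^{dk}$ estimate you allude to is asserted rather than proved, and ``$\beta+\gamma\approx 2k/n$'' alone does not give it.

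\textbf{A repair that needs no case split.} The estimate is true. Write $\beta=k_A/|A|$ and $\gamma=k_B/|B|$, and apply weighted AM--GM with weights $k_A/k$ and $k_B/k$:
\[
\gamma^{dk_A}\beta^{dk_B}\le\Bigl(\frac{k_A\gamma+k_B\beta}{k}\Bigr)^{dk}
=\Bigl(\frac{k_Ak_B}{k}\Bigl(\frac1{|A|}+\frac1{|B|}\Bigr)\Bigr)^{dk}
\le\Bigl(\frac{k}{n-1}\Bigr)^{dk}.
\]
The last step uses $k_Ak_B\le k^2/4$ and $|A|,|B|\ge(n-1)/2$. Since $k\le n/2$ and $dk>d\alpha n=5n$, this is $O(2^{-5n})$. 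That beats the at most $n^2 2^n$ choices of $(K_A,K_B)$.

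\textbf{An elementary alternative.} By symmetry assume $k_A>\alpha n/2$.
\begin{itemize}
\item If $\gamma\le 1/2$, then $\gamma^{dk_A}\le 2^{-2.5n}$.
\item If $\gamma>1/2$, then $k_B>(n-1)/4$ and $\beta<\frac{n+1}{2(n-1)}$, so $\beta^{dk_B}=O(2^{-25n})$.
\end{itemize}
With either repair your proof of the lemma is complete.
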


\subsection{Hamilton cycle and perfect matching}

Bohman and Frieze \cite{Bohman2009} proved that the random
\(3\)-out graph is Hamiltonian whp.  Here we show that $\mathcal D_{100}(A,B)$ is Hamiltonian whp. From this and Lemma \ref{lem:construct-d-out} will follow the upper bound of Theorem \ref{thm:main-spanning-results} for the Hamiltonicity.

A non-edge $e$ of a graph $G$ is called a \emph{booster} if $G+e$ is
Hamiltonian or the longest path in $G+e$ is longer than the longest path
in $G$.

\begin{lemma}[P\'osa's booster lemma from \cite{Posa1976}]
\label{lem:posa-boosters}
Let $G$ be a connected non-Hamiltonian graph.  Suppose that, for some
integer $k\ge1$,
\[
    |N_G(X)\setminus X|\ge2|X|
    \qquad\text{whenever }1\le|X|\le k.
\]
Then $G$ has at least $(k+1)^2/2$ boosters.
\end{lemma}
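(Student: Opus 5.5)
We prove Pósa's booster lemma by Pósa rotations applied to a longest path. Fix a longest path $P=v_0v_1\cdots v_\ell$ in $G$. If $v_iv_\ell$ is an edge with $i<\ell-1$, the \emph{rotation} with fixed endpoint $v_0$ and pivot $v_i$ produces the path $v_0\cdots v_iv_\ell v_{\ell-1}\cdots v_{i+1}$. This path has the same vertex set and length, and its new endpoint is $v_{i+1}$. Let $\mathrm{End}(v_0)$ be the set of all endpoints (other than $v_0$) of paths obtainable from $P$ by sequences of rotations fixing $v_0$. Each such path $Q$ is again a longest path on $V(P)$.

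\textbf{Step 1: the standard Pósa lemma.} We show that $X=\mathrm{End}(v_0)$ satisfies
\[
 |N_G(X)\setminus X|\le 2|X|-1 .
\]
The argument is the usual one. Take any $x\in X$ and a neighbour $y\notin X$ of $x$. Since $P$ is longest, $y\in V(P)$. Then $y$ must be a path-neighbour $v_{i\pm1}$ of some vertex $v_i\in X$. Otherwise, rotating at $y$ from a path ending at $x$ would put a path-neighbour of $y$ into $X$, and this requires checking that the $P$-neighbours of $y$ are unchanged along the rotations. Hence
\[
 N(X)\setminus X\subseteq\{v_{i-1},v_{i+1}: v_i\in X\},
\]
and the right-hand side has size at most $2|X|-1$, because $v_\ell\in X$ has no successor. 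Combined with the hypothesis $|N(X)\setminus X|\ge 2|X|$ for all $|X|\le k$, this forces $|\mathrm{End}(v_0)|\ge k+1$.

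\textbf{Step 2: every endpoint pair gives a booster.} Take $x\in\mathrm{End}(v_0)$, witnessed by a longest path $Q_x$ from $v_0$ to $x$. Reverse $Q_x$, fix $x$ as the new fixed endpoint, and apply Step 1 again. This gives a set $\mathrm{End}(x)$ with $|\mathrm{End}(x)|\ge k+1$, where each $y\in\mathrm{End}(x)$ is joined to $x$ by a longest path $Q_{xy}$ on $V(P)$.

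Consider the pair $\{x,y\}$. It cannot be an edge. If it were, $Q_{xy}+xy$ would be a cycle $C$ on $V(P)$. If $V(P)=V(G)$, then $C$ is Hamiltonian, contradicting the assumption that $G$ is not. Otherwise connectivity gives an edge from $C$ to $V(G)\setminus V(P)$, and opening $C$ there yields a path longer than $P$, a contradiction.

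So $xy$ is a non-edge, and $G+xy$ contains the cycle $Q_{xy}+xy$. Either this cycle is Hamiltonian, or, by connectivity as above, $G+xy$ has a path longer than $P$. In both cases $xy$ is a booster.

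\textbf{Step 3: counting.} Step 2 produces the pairs $\{x,y\}$ with $x\in\mathrm{End}(v_0)$ and $y\in\mathrm{End}(x)$. There are at least $(k+1)^2$ ordered such pairs, and each unordered pair is counted at most twice. Hence there are at least $(k+1)^2/2$ distinct boosters.

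\textbf{Main obstacle.} The only delicate point is Step 1: verifying that every vertex of $N(X)\setminus X$ is a $P$-neighbour of some vertex of $X$. I will use Pósa's original argument. Suppose $y\notin X$ and neither $P$-neighbour of $y$ lies in $X$. Then along every sequence of rotations both edges of $P$ at $y$ are kept, because an edge $v_jv_{j+1}$ of $P$ is broken only when $v_j$ or $v_{j+1}$ becomes an endpoint. Consequently, if $y$ is adjacent to an endpoint $x$ of a rotated path, rotating at $y$ makes one of $y$'s original $P$-neighbours an endpoint, so it lies in $X$, a contradiction.
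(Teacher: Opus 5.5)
Your proposal is correct. It is the standard P\'osa rotation argument: $|N(X)\setminus X|\le 2|X|-1$ for $X=\mathrm{End}(v_0)$, then a second round of rotations from each $x\in\mathrm{End}(v_0)$, then connectivity to show that each pair $\{x,y\}$ is a non-edge and a booster, and finally the count. The paper gives no proof of its own and simply cites P\'osa, so this is the argument it relies on. One point is implicit in your write-up: you need $\ell\ge 2$ so that $Q_{xy}+xy$ is a genuine cycle, and this follows from $|\mathrm{End}(v_0)|\ge k+1\ge 2$.
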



\begin{proof2}


Fix a balanced partition $A\cup B=[n]$.  By
Lemma~\ref{lem:construct-d-out}, Builder can construct
$H\sim\mathcal D_{100}(A,B)$ in
\[
    O\left(n+\frac{n\log n}{s}\right)
\]
rounds whp.  By Lemma~\ref{expanders}, we
condition on $H$ being connected and satisfying
\[
    |N_H(X)\setminus X|\ge2|X|,
    \qquad 1\le|X|\le\alpha n,
    \qquad \alpha=\frac1{20}.
\]

Builder now ignores the choice and adds $Cn$ independent uniform random
edges, using equation~\ref{eq:random-edge}.  Expansion and connectivity are
preserved as edges are added.  If the current graph is not Hamiltonian,
Lemma~\ref{lem:posa-boosters}, with $k=\lfloor\alpha n\rfloor$, gives at
least
\[
    \frac{(k+1)^2}{2}\ge c_\alpha n^2
\]
boosters. Hence the next uniform edge is a booster with conditional probability at least a constant $p_\alpha>0$. After adding a booster, connectivity and expansion are preserved, so the argument can be repeated unless the graph is already Hamiltonian. Since each booster either creates a Hamilton cycle or increases the length of a longest path, at most $n-1$ booster steps are needed. For sufficiently large $C$, Lemma~\ref{lem:adaptive-bernoulli} and Chernoff's inequality \ref{lem:chernoff} give
\[
\Prob\bigl(\operatorname{Bin}(Cn,p_\alpha)<n\bigr)=e^{-\Omega(n)},
\]
so $O(n)$ further rounds suffice whp.
\end{proof2}


Since every Hamilton cycle on an even number of
vertices contains a perfect matching, we automatically receive the desired upper bound for perfect matchings.

\section{Local subgraph properties}

The lower bounds for $K_3$ and $K_4$ follow from Theorem \ref{thm:main-clique-lower}. For $d=3$, we have $(r_1,r_2,r_3)=(3,2,0)$; for $d=4$, we have $(r_1,r_2,r_3,r_4)=(6,5,3,0)$.

\subsection{Triangles}
The lower bound follows from the general Theorem \ref{thm:main-clique-lower}. Here we prove the matching (up to any growing function $\alpha$) upper bound. Start with the case $s\le n^{3/4}$.  Put
\[
    m:=ns^{-2/3},
    \qquad
    a:=\gamma n/s,
\]
where $\gamma>0$ is a sufficiently small constant, and fix a
set $R$ of $a$ star centers. Choose $0<\varepsilon\ll\gamma$.  Call a round useful if
$u_t\notin R$, the vertex $u_t$ has not previously been used as a
leaf, and $S_t\cap R\ne\varnothing$.  In every useful round, join
$u_t$ to a uniformly chosen vertex of $S_t\cap R$, and regard
$u_t$ as a new leaf.  Continue until $N:=\varepsilon m$ useful
rounds have occurred.

At all times at most $a+N\le2\gamma n$ vertices are unavailable as
new leaves, while \eqref{eq:hit} gives
\[
    \Prob(S_t\cap R\ne\varnothing\mid
    u_t\text{ is available})=\Theta(1).
\]
Thus every round is useful with conditional probability bounded away
from zero.  Consequently, using adapted Bernoulli
variables from Lemma \ref{lem:adaptive-bernoulli} and Chernoff's inequality \ref{lem:chernoff} show that $N$ useful rounds, and hence $N$ distinct
leaves, are obtained in $O(m)$ rounds $\whp$.

Conditional on a round being useful, its center is uniform on $R$,
and the successive center choices are independent.  Hence the star
sizes $(D_r)_{r\in R}$ have the occupancy distribution of $N$
independent balls in $a$ bins.

Let
\[
    Z:=\sum_{r\in R}\binom{D_r}{2}.
\]
Writing $Z$ as the sum of the indicators that two balls occupy the
same bin gives
\[
 \E Z=\frac{\binom N2}{a}
      =\Theta\!\left(\frac{m^2s}{n}\right),
 \qquad
 \operatorname{Var}Z\le\E Z.
\]
Since $m^2s/n=n/s^{1/3}\to\infty$, Chebyshev's inequality gives
\begin{equation}\label{eq:K3-Z}
    Z=\Theta(m^2s/n)\qquad\whp.
\end{equation}
Moreover, the mean load is
\[
    N/a=\frac{\varepsilon}{\gamma}\frac{ms}{n}
       =\frac{\varepsilon}{\gamma}s^{1/3}.
\]
As $s^{1/3}\le n/s$ for $s\le n^{3/4}$, choosing
$\varepsilon/\gamma$ small and applying the binomial upper-tail bound
and a union bound gives
\begin{equation}\label{eq:K3-load}
    \max_{r\in R}D_r\le\eta n/s\qquad\whp,
\end{equation}
where $\eta$ is an arbitrarily small fixed constant.

Keep the stars fixed.  A further round closes a triangle whenever
$u_t$ and some vertex of $S_t$ are distinct leaves of the same
star.  On \eqref{eq:K3-Z}--\eqref{eq:K3-load}, its conditional success
probability is, by \eqref{eq:hit},
\begin{align*}
 q_3
 &=
 \frac1n\sum_{r\in R}D_r
 \left(1-\frac{\binom{n-D_r}{s}}{\binom{n-1}{s}}\right)\\
 &\ge c\frac{s}{n^2}\sum_{r\in R}D_r(D_r-1)
  =\Omega\!\left(\frac{m^2s^2}{n^3}\right).
\end{align*}
The choice of $m$ is exactly the balance
\[
    mq_3=\Omega\!\left(\frac{m^3s^2}{n^3}\right)=\Omega(1).
\]
Thus another $O(m)$ rounds close a triangle with probability bounded
away from zero.  Hence,  Corollary~\ref{cor:From_constant_probability_to_whp} gives
$\tau(K_3,s)\le\alpha ns^{-2/3}$.

For $s\ge n^{3/4}$, thin the candidate set to
$s_0=\lfloor n^{3/4}\rfloor$.  It gives
$\omega(n)\sqrt n$ rounds.

\subsection{\texorpdfstring{$K_4$}{K4}}

We will need two lemmas. The first lemma allows Builder to construct an independent
random subgraph $F_p$ of a prescribed graph $F$. Lemma \ref{lem:bernoulli_subgraph_coupling} is then used directly for the random page edges
and for the final closing edges. We just will check that the restriction the maximum out-degree holds. The second lemma is an application of the first lemma for building many good stars and then pages inside them.

For a graph $F$ and $p\in[0,1]$, let $F_p$ denote the random
spanning subgraph of $F$ obtained by retaining each edge
independently with probability $p$.

\begin{lemma}[Bernoulli subgraph coupling]\label{lem:bernoulli_subgraph_coupling}
There are absolute constants $\eta,\kappa>0$ with the following
property.  Let $F$ be a graph on $[n]$ with an orientation $\vec F$ satisfying
\[
    \Delta^+(\vec F)\le \eta\frac ns,
\]
and suppose that $m\to\infty$ and $ms/n^2\le\eta$.  There is a Builder strategy for which the graph $G_{2m}$ obtained
after $2m$ rounds can be coupled with $F_p$ so that
\[
    \Prob(F_p\subseteq G_{2m})\ge 1-e^{-\Omega(m)}
    \qquad and \qquad p=\kappa\frac{ms}{n^2}.
\]
\end{lemma}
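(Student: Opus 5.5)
The plan is to have each round, with small probability, try to realize the edge of $F$ coming out of the vertex $u_t$, and then to thin the successes by coin flips so that every edge ends up present independently. Fix the orientation $\vec F$. In round $t$, Builder looks at the set $T(u_t):=N^+_{\vec F}(u_t)$ of out-neighbours of $u_t$. If $S_t\cap T(u_t)\neq\varnothing$, Builder picks $v_t\in S_t\cap T(u_t)$ by a rule that is symmetric in the elements of $S_t\cap T(u_t)$; for definiteness, uniformly at random. Otherwise the round is wasted.

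The first step is to compute the probability that a fixed arc $(x,y)\in\vec F$ is selected in a given round. We need $u_t=x$ and $y\in S_t$, which has probability $\frac1n\cdot\frac{s}{n-1}$. Given this, $y$ is the one chosen with probability $\E[1/|S_t\cap T(x)|\mid y\in S_t]$. The count $|S_t\cap T(x)|-1$ has mean at most $s\,d^+(x)/n\le\eta$. So by Jensen, $y$ is chosen with probability at least $1/(1+\eta)$, and trivially at most $1$. Hence the per-round selection probability $q_{xy}$ lies in $[\tfrac{c}{n^2},\tfrac{2s}{n^2}]$ with $q_{xy}\ge \frac{s}{2n^2}$.

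To make every arc have exactly the same probability, I would add an extra coin: after choosing $y$, Builder keeps the arc with probability $q_0/q_{xy}$, where $q_0:=\frac{s}{2n^2}$. This makes the per-round probability of recording arc $(x,y)$ exactly $q_0$ for every arc. The recorded-arc events in one round are mutually exclusive, and rounds are independent.

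Over $2m$ rounds, the number of times arc $e$ is recorded then has the multinomial distribution. That is, the vector of counts is $\mathrm{Mult}(2m;(q_0)_{e},1-|E(F)|q_0)$. The presence indicators $\mathbf 1\{\text{count}_e\ge1\}$ are not independent, but they are negatively associated.

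Rather than fight negative association, I would Poissonize. Let the number of rounds actually used be $N\sim\mathrm{Po}(m)$, realized by running $2m$ rounds and only counting the first $N$ of them. Then the counts are independent $\mathrm{Po}(mq_0)$ variables, so each edge is present independently with probability $1-e^{-mq_0}\ge mq_0/2=:p$, since $mq_0\le\eta$. Define $\kappa:=1/4$, thin each present edge independently down to probability exactly $p$, and we have coupled $F_p\subseteq G_{N}$. The coupling fails only if $N>2m$, which by Lemma~\ref{lem:chernoff} (Poisson case) has probability $e^{-\Omega(m)}$. Since $F$ is monotone-embedded, $G_N\subseteq G_{2m}$ on that event.

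The main obstacle is the choice step: making the selection probability of each arc uniform despite collisions between several out-neighbours appearing in $S_t$. This is exactly where the hypothesis $\Delta^+\le\eta n/s$ enters, and the rejection coin resolves it. The condition $ms/n^2\le\eta$ is used only to keep $1-e^{-mq_0}$ comparable to $mq_0$.
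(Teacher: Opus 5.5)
Your proposal is correct and follows the paper's proof essentially step for step. Both use the same uniform choice from $S_t\cap N^+_{\vec F}(u_t)$, Jensen to bound each arc's per-round selection probability from below, Poissonization with $N\sim\mathrm{Po}(m)$ truncated at $2m$, Poisson splitting to get independent presence indicators, and a final independent thinning down to $p$. The only difference is your extra per-round rejection coin equalizing every arc to $q_0$, which is harmless but redundant: the paper simply thins each present arc with probability $p/r_{xy}$ at the end.
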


\begin{proof}
First define the rule for one round. When $u_t = x$, Builder chooses uniformly one vertex from the intersection between neighbors of $x$ and the current candidate set, i.e. from $S_t\cap N_{F}^+(x)$. If the intersection is empty, do nothing.
Let
$xy$ be a fixed edge of $F$. For $xy$ to be chosen in a given round,
three events must occur: $u_t = x$, $y\in S_t$ and among all neighbors of $x$, $y$ is taken. Let us assume that the first two events have occurred. Let $Z$ be the number of other vertices $S_t\cap N_{F}^+(x)\setminus\{y\}$. Then the intersection contains $1 + Z$ valid
vertices, and the conditional probability of choosing $y$ is $\frac{1}{(1 + Z)}.$ Therefore, using the Law of Total Probability

\begin{gather*}
q_{xy} := \Prob(\text{xy chosen in one round}) =\\\sum_z\Prob(\text{y chosen}| Z=z, u_t=x, y\in S_t)\Prob(Z=z|u_t=x, y\in S_t) \Prob(u_t=x, y\in S_t)=\\
\sum_z\frac{1}{1+z}\Prob(Z=z|u_t=x, y\in S_t)\frac{s}{n(n-1)}=\frac{s}{n(n-1)}\E\frac{1}{1+Z}.
\end{gather*}
Now let us estimate $\E Z$. 
\[\E Z =\frac{(s-1)(d^+_F(x)-1)}{n-2} \leq 2\eta.\]
Since $(1 + z)^{-1}$ is convex, Jensen’s inequality gives
\[\E \frac{1}{1 + Z}\geq\frac{1}{1 + 2\eta}.\]
Consequently,
\[
\frac{1}{1 + 2\eta}\frac{s}{n(n-1)}\leq q_{xy}
\leq\frac{s}{n(n-1)}.
\]
Independently sample $N\sim Poisson(m).$ During the first $N$ rounds apply the above rule. If $N>2m$ declare the coupling unsuccessful. By Poisson tail bound $\Prob(N > 2m) \leq e^{-cm}$. 

For each $x\to y\in E(\vec F)$, let $C_{xy}$ be the number of times this edge is selected during the first $N$ rounds. By Poisson splitting, these counts are independent, with
\[
C_{xy}\sim\operatorname{Poisson}(mq_{xy}).
\]
Thus the indicators 
\[I_{xy}:=I_{C_{xy}>0}\] 
are independent Bernoulli
variables with parameter 
\[r_{xy}=1-e^{-mq_{xy}}.\] 

By the hypothesis of Lemma \ref{lem:bernoulli_subgraph_coupling}, $\frac{ms}{n^2} \leq\eta $. Hence, after making $\eta$ small enough, there is an absolute $\kappa>0$ such
that
$r_{xy} \geq \kappa
\frac{ms}{n^2}$
for every arc. 

Put 
\[
p:=\kappa
\frac{ms}{n^2}.
\]
Independently retain each positive indicator: if $I_{xy}=1$,  keep the edge $xy$ with probability $p/r_{xy}$. It gives the desired $F_p$.
\end{proof}

\medskip
\begin{lemma}[Random or disjoint stars]\label{lem:random_stars}
Let $R,W\subseteq[n]$ be disjoint, with $|W|=\Theta(n)$ and
$|R|\le\eta n/s$.  Suppose that an application of Lemma \ref{lem:bernoulli_subgraph_coupling} to
the complete bipartite graph between $R$ and $W$ gives expected
degree at least $8k$ at every $r\in R$, where $k\gg\log n$. Let $B$ denote the resulting bipartite graph.
Then, with probability $1-o(1)$, Builder obtains sets
\[
 D_r\subseteq W,\qquad |D_r|=k\quad(r\in R),
\]
with every edge from $r$ to $D_r$ present.  Moreover:
\begin{itemize}
 \item conditioned on $d_B(r)\geq k$ for all $r\in R$, sets $D_r$ may
be chosen independently and uniformly among the $k$-subsets of $W$, or
 \item sets $D_r$ being pairwise disjoint, provided $|R|k=o(n)$.
\end{itemize}
\end{lemma}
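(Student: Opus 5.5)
We apply Lemma~\ref{lem:bernoulli_subgraph_coupling} to $F=K_{R,W}$, the complete bipartite graph between $R$ and $W$, with every edge oriented from $W$ to $R$. Then $\Delta^+(\vec F)\le|R|\le\eta n/s$, so the hypothesis holds. (The orientation $R\to W$ would have out-degree $|W|=\Theta(n)$, which is too large.) The lemma gives $B=F_p\subseteq G_{2m}$ with probability $1-e^{-\Omega(m)}$, where $p$ is chosen so that $p|W|\ge 8k$. Every degree $d_B(r)$ is then $\operatorname{Bin}(|W|,p)$ with mean at least $8k$, and these degrees are independent across $r\in R$ because the stars are edge-disjoint in $F$. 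By Chernoff (Lemma~\ref{lem:chernoff}), $\Prob(d_B(r)<k)\le e^{-ck}$. Since $k\gg\log n$ and $|R|\le n$, a union bound gives $d_B(r)\ge k$ for all $r$ with probability $1-o(1)$.

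\emph{Independent uniform case.} Conditioned on the degree vector $(d_B(r))_r$, each neighbourhood $N_B(r)$ is a uniform $d_B(r)$-subset of $W$, and these neighbourhoods are independent over $r$. This holds because in $F_p$ the edge sets at different $r$ are independent and exchangeable over $W$. Let $D_r$ be a uniformly random $k$-subset of $N_B(r)$, chosen with fresh internal randomness. A uniform $k$-subset of a uniform $d$-subset is a uniform $k$-subset of $W$, so the $D_r$ are independent and uniform among $k$-subsets of $W$, conditionally on the good event. Each edge from $r$ to $D_r$ is present by construction.

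\emph{Disjoint case.} Process the vertices of $R$ in order $r_1,r_2,\dots$ and greedily let $D_{r_i}$ be $k$ neighbours of $r_i$ in $B$ avoiding $U_i:=\bigcup_{j<i}D_{r_j}$, where $|U_i|\le|R|k=o(n)$. The main obstacle is that $U_i$ depends on $B$. However, $U_i$ depends only on the stars at $r_1,\dots,r_{i-1}$, which are independent of the star at $r_i$. So conditionally on $U_i$, the count $|N_B(r_i)\setminus U_i|$ is $\operatorname{Bin}(|W|-|U_i|,p)$ with mean at least $(1-o(1))8k\ge 4k$. Chernoff gives failure probability at most $e^{-ck}$ at each step, and a union bound over $|R|\le n$ steps gives overall failure probability $o(1)$ since $k\gg\log n$. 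Adding the $e^{-\Omega(m)}$ coupling failure, which is $o(1)$ because $m\to\infty$, completes both bullets.
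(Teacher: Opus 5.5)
Your proposal is correct and follows essentially the same route as the paper: you orient $K_{R,W}$ from $W$ to $R$ to satisfy the out-degree hypothesis, use a binomial tail bound with a union bound to get $d_B(r)\ge k$ for all $r$, take uniform $k$-subsets of $N_B(r)$ for the first bullet, and for the second run the greedy procedure, using that the star at $r_i$ is independent of the earlier used set. You also account explicitly for the $e^{-\Omega(m)}$ failure probability of the coupling, which the paper leaves implicit.
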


\begin{proof}
Orient the complete bipartite graph $K_{W,R}$ from $W$ to $R$. Then,  $\Delta^+(\vec K_{W,R})=|R|\le\eta n/s$. Lemma~\ref{lem:bernoulli_subgraph_coupling} gives an independent bipartite subgraph $B$ in which every edge has
probability $p$. For each $r$,
$d_B(r)\sim Bin(|W|,p)$, and tail bound gives
\[
 \Prob(d_B(r)<k)\le e^{-ck}.
\]
A union bound shows that all centers have at least $k$ neighbours.

For the first case, choose $D_r$ uniformly among the
$k$-subsets of $N_B(r)$. The edge sets incident with distinct
centers are independent. Conditional on $d_B(r)\ge k$, every member of $\binom Wk$ has the same
probability. Thus the $D_r$'s are conditionally
independent uniform $k$-subsets.

For the second case, order the centers and choose each time from unused neighbors.
Before treating a center $i\in |R|$, at most $|R|k=o(n)$ vertices have been
used. The used set depends only on edges incident with earlier centers
and is therefore independent of the edges incident with $i$. So, the
number of unused neighbors of $i$ is binomial with mean
\[
 (|W|-o(n))p=(1-o(1))|W|p\ge7k.
\]
By Chernoff's inequality \ref{lem:chernoff}, its probability of being smaller than $k$ is $e^{-\Omega(k)}=o(n^{-1})$. A
union bound completes the proof. 

The failure in both cases
has probability $o(1)$.
\end{proof}

Here we start the proof of the upper bound of Theorem \ref{thm:intro-K4}. 

For $s\le n^{8/9}$, put
\[
 m=\begin{cases}
 n^{4/3}s^{-5/6},&1\le s\le n^{4/7},\\
 n^{6/5}s^{-3/5},&n^{4/7}\le s\le n^{8/9}.
 \end{cases}
\]
By Corollary~\ref{cor:From_constant_probability_to_whp}, it suffices to construct a $K_4$ with probability
bounded away from zero in $O(m)$ rounds. The range $s>n^{8/9}$ will
follow by reducing the number of candidates.

Let $\eta,\kappa$ be the constants in Lemma~\ref{lem:bernoulli_subgraph_coupling}, and choose sufficiently
small constants $\varepsilon,\delta>0$ in terms of them. Take disjoint
sets $R,X,Y$ with
\[
 |R|=a:=\left\lfloor\varepsilon\frac ns\right\rfloor,
 \qquad |X|=|Y|=\lfloor n/3\rfloor,
 \qquad p=\kappa\frac{ms}{n^2}.
\]
We first build stars centred at $r\in R$, with leaf sets
$A_r\subseteq X$ and $B_r\subseteq Y$. If $x\in A_r$ is subsequently
joined to two distinct vertices $y,z\in B_r$, then $r,x,y,z$ span a
$K_4$ with only $yz$ possibly missing. Let $H$ be the graph of the
pairs $yz$ obtained in this way. Our aim is to arrange that
\begin{equation}\label{eq:closing}
 \Delta(H)\le\eta n/s,\qquad p|E(H)|\ge c>0.
\end{equation}
Lemma~\ref{lem:bernoulli_subgraph_coupling} then finds an edge of $H$ in at most $2m$ further rounds
with probability at least $1-e^{-c}-o(1)$, completing a $K_4$.
All stages use separate intervals of rounds. We work with the random
subgraphs supplied by Lemmas~\ref{lem:bernoulli_subgraph_coupling} and~\ref{lem:random_stars}; the total probability that
one of these finitely many couplings fails is $o(1)$.

\paragraph{The case $1\le s\le n^{4/7}$.}
Set $d=\lfloor\delta ms/n\rfloor$. Thus
\[
 d=\Theta(n^{1/3}s^{1/6})\gg\log n,\qquad
 d\le\delta n/s,\qquad ad=\Theta(m),\qquad
 dp=\Theta((s/n)^{1/3})=o(1).
\]
Apply Lemma~\ref{lem:random_stars} between $R$ and each of $X,Y$, choosing $A_r,B_r$
as independent uniform $d$-sets. Its expected degree requirement holds
because $p|X|=p|Y|\ge8d$ for sufficiently small $\delta$.

We would like to sample the edges between $A_r$ and $B_r$ independently for each $r$. To avoid using the same edge twice, discard $xy$ whenever
$x\in A_r$ and $y\in B_r$ for more than one $r$.
For a fixed $r,x,y$, the probability of such a repetition is
$O(ad^2/n^2)=o(1)$. Let $W$ count the discarded occurrences $(r,x,y)$. Then
$\mathbb E W=o(ad^2)$, so Markov's inequality gives $W=o(ad^2)$ whp. Each pair $(r,x)$ left with fewer than $d/2$ possible neighbors in $B_r$ loses at least $d/2$ occurrences. Thus there are at most $2W/d=o(ad)$ such pairs. Hence, whp, all but $o(ad)$ pairs $(r,x)$, $x\in A_r$, still have at least $d/2$ possible neighbors in $B_r$. Discard the other pairs, and let $F$ be the union of the remaining edges, oriented from $Y$ to $X$. 

For each $y\in Y$, the number of sets $B_r$ containing $y$ has
distribution $\operatorname{Bin}(a,d/|Y|)$, and each contributes at most $d$ to
$d_F^+(y)$. The standard binomial tail bound therefore gives
\[
 \Prob\bigl(d_F^+(y)>\eta n/s\bigr)
 \le\left(C\frac{d^2}{n}\right)^{\eta n/(sd)}=o(n^{-2}),
\]
since $d^2/n=O(n^{-1/7})$ and $\eta n/(sd)\ge\eta/\delta$.
Thus $\Delta^+(F)\le\eta n/s$ with high probability, and we can
apply Lemma~\ref{lem:bernoulli_subgraph_coupling} to obtain $F_p$.

For each retained pair $(r,x)$, if exactly two of its remaining
edges to $B_r$ are sampled, say $xy$ and $xz$, put $yz$ in $H$.
Each pair has between $d/2$ and
$d$ possible neighbours, so the probability of obtaining exactly two
is $\Theta(d^2p^2)$. These events are independent, since different
pairs use disjoint sets of possible edges. Let $Z$ be the number
of pairs with exactly two sampled edges. Since
$ad^3p^2=\Theta(n^{2/3}s^{-1/6})\ge c_1n^{4/7}$, Chernoff's
inequality gives $Z=\Theta(ad^3p^2)$ with high probability.

In fact, these pairs give distinct edges of $H$ with high probability.
Every repetition requires four distinct sampled edges. Counting
separately pairs with the same centre $r$ and with different centres,
the expected number of repetitions is at most
\[
 O\left(ad^4p^4+\frac{a^2d^6p^4}{n^2}\right)
 =O\left((s/n)^{1/3}+n^{-2/3}s^{-1/3}\right)=o(1).
\]
For the second term we used that two independent $d$-subsets of $Y$
contain $O(d^4/n^2)$ common unordered pairs on average.
Consequently $|E(H)|=Z=\Theta(ad^3p^2)$ with high probability.
Also $d_H(y)\le d_{F_p}^+(y)\le d_F^+(y)$: each pair $(r,x)$
contributes at most one edge to $H$, using its own sampled edges.
Finally,
\[
 p|E(H)|=\Theta(ad^3p^3)
 =\Theta\left(\frac{m^6s^5}{n^8}\right)=\Theta(1).
\]
This proves~\eqref{eq:closing} and completes the construction in this range.

\paragraph{The case $n^{4/7}\le s\le n^{8/9}$.}
In this range we take $|A_r|=\Theta(n/s)$ and larger sets $B_r$.
Recall that round $t$ offers a vertex $u_t$ and a uniformly random
set $S_t\subseteq[n]\setminus\{u_t\}$ of $s$ possible neighbors,
from which Builder chooses a neighbor of $u_t$.
When $u_t\in B_r$, the set $S_t$ meets $A_r$ with probability
bounded away from zero. A larger $B_r$ increases the probability
that $u_t\in B_r$. We use vertex-disjoint stars and add edges from distinct
vertices of $B_r$ to independently and uniformly chosen vertices
of $A_r$. If $y,z\in B_r$ are joined to the same $x\in A_r$, then
adding $yz$ completes a $K_4$ on $r,x,y,z$. These pairs form
disjoint cliques, one on each group of leaves
with a common chosen neighbour. We want many pairs in these groups,
while keeping every group small enough to apply Lemma~\ref{lem:bernoulli_subgraph_coupling}.

Set
\[
 b=\left\lfloor\delta\frac ns\right\rfloor,\qquad
 d=\left\lfloor\delta\frac{ms}{n}\right\rfloor.
\]
Here $b\le d$, both are much larger than $\log n$, and
$ab\le ad=\Theta(m)=o(n)$. Lemma~\ref{lem:random_stars} therefore gives pairwise
vertex-disjoint stars with centre $r$ and leaf sets $A_r\subseteq X$,
$B_r\subseteq Y$, where $|A_r|=b$ and $|B_r|=d$.
As before, the expected sampled degrees are at least $8d$.

For each $r$, we will join exactly $h$ distinct vertices of $B_r$
to vertices of $A_r$, where
\[
 h=\left\lfloor\gamma\frac{m^2s}{n^2}\right\rfloor
 \qquad\text{and}\qquad h/d=\Theta(m/n)=o(1).
\]
The constant $\gamma>0$ will be chosen sufficiently small in terms
of $\delta$ and $\eta$. Here $h=\Theta(md/n)$, of the same order as
the expected number of offers from $B_r$ in $m$ rounds.

Whenever $u_t=y\in B_r$ has not yet been
joined to $A_r$ by this procedure and $S_t$ meets $A_r$, add $yx$,
choosing $x$ uniformly from $S_t\cap A_r$. Ignore subsequent offers
of $y$, and stop at $r$ once $h$ vertices of $B_r$ have been joined to $A_r$.
The sets $B_r$
are disjoint, so there is at most one applicable centre in a round.
In all other rounds play arbitrarily and ignore the resulting edges.

This procedure joins $h$ distinct vertices of each $B_r$ to $A_r$
in $Cm$ rounds with high probability, for a sufficiently large constant $C$.
Indeed, until $h$ vertices of $B_r$ have been joined to $A_r$, at least $d-h\ge d/2$
vertices of $B_r$ remain available. The offered vertex is one of these with
probability at least $d/(2n)$, and $S_t$ meets $A_r$
with probability bounded away from zero, since $bs/n=\Theta(1)$.
Hence the chance of adding an edge at this centre in a round is at
least $c_2d/n$. Lemma~\ref{lem:adaptive-bernoulli} and Chernoff's inequality bound the probability
of adding fewer than $h$ edges after $Cm$ rounds by $e^{-\Omega(h)}$.
A union bound suffices, since
$h=\Theta(n^{2/5}s^{-1/5})\ge c_3n^{2/9}$.

The chosen neighbours in $A_r$ are independent and uniform.
To see this, condition on which vertices of the sets $B_r$ are joined
to their respective sets $A_r$, and in which rounds. This information
depends only on the offered vertices and on whether $S_t$
meets $A_r$, not on the neighbour chosen from it.
Symmetry within $A_r$ gives a uniform choice in each of these rounds,
independently across rounds and centres.
For $x\in A_r$, let $Q_{r,x}\subseteq B_r$ be the set of vertices
joined to $x$ by this procedure, and let
\[
 H=\bigcup_{r\in R}\ \bigcup_{x\in A_r}K_{Q_{r,x}}.
\]
As observed above, each edge of $H$ completes a $K_4$. Moreover,
the sets $Q_{r,x}$ are disjoint, so $|E(H)|$ counts pairs with a
common chosen neighbour, and $\Delta(H)=\max_{r,x}(|Q_{r,x}|-1)$.

We now check the two bounds in~\eqref{eq:closing}, conditional on
having joined $h$ vertices of each $B_r$ to $A_r$. Any two of these
vertices of $B_r$ have the same chosen neighbour with probability $1/b$.
The indicators
of this event for different pairs are pairwise independent, even
when the pairs share a vertex: the joint probability is then $1/b^2$.
Thus, for $Z=|E(H)|$,
\[
 \E Z=a\binom h2\frac1b
 =\Theta\left(\frac{m^4s^2}{n^4}\right),\qquad
 \operatorname{Var} Z\le\E Z.
\]
Since $m^4s^2/n^4\ge n^{4/9}$, Chebyshev's inequality gives
$|E(H)|=\Theta(m^4s^2/n^4)$ with high probability.
It remains to bound the size of each group. The random variable
$|Q_{r,x}|$ has distribution $\operatorname{Bin}(h,1/b)$, with mean $h/b$.
Compared with the allowed group size $\eta n/s$, this mean is
small, since
\[
 \frac{h/b}{n/s}
 =\left(\frac\gamma\delta+o(1)\right)n^{-8/5}s^{9/5}
 \le\frac{2\gamma}{\delta}.
\]
Choosing $\gamma$ sufficiently small, a binomial tail bound and a
union bound over the $ab\le n$ sets give
$\max_{r,x}|Q_{r,x}|\le\eta n/s$ with high probability; here
$n/s\ge n^{1/9}$. Hence $\Delta(H)\le\eta n/s$, while
\[
 p|E(H)|=\Theta\left(\frac{m^5s^3}{n^6}\right)=\Theta(1).
\]
Again~\eqref{eq:closing} completes the construction.

\paragraph{The case $s>n^{8/9}$.}
In every round keep a uniformly chosen subset of
$s_0=\lfloor n^{8/9}\rfloor$ candidates. This simulates the process
with parameter $s_0$, so the preceding construction takes
$O(n^{2/3})$ rounds and succeeds with probability bounded away from
zero. Corollary~\ref{cor:From_constant_probability_to_whp} completes the proof in all three ranges.
\hfill$\square$

\subsection{The lower bound for cliques}

To prove Theorem~\ref{thm:main-clique-lower}, we develop a way to compress the information about a clique
produced by the process. The output is a certificate in which some vertex names are replaced by free variables, some edges are omitted, the remaining edges are listed chronologically with their orientations.

Let us illustrate it on an example. Suppose that a copy of $K_4$ on vertices $a,b,c,d$
is formed by the moves
\[
 a\to c,\qquad c\to d,\qquad b\to d,\qquad
 a\to d,\qquad b\to c,\qquad a\to b,
\]
in this order. We keep the names $a,b$, replace $c,d$ by free
variables $x,y$, and omit the edge $cd$. The {\it certificate with two free variables} records
\[
 a\to x,\qquad b\to y,\qquad
 a\to y,\qquad b\to x,\qquad a\to b,
\]
together with the rounds in which these five edges were added. 

We first prove a lemma and then use it to construct the certificates.
Here $j$ denotes the number of free variables, while the remaining
$d-j$ vertex names are specified. Varying $j$ over $1,\ldots,d-1$
gives a family of probability bounds, from which we choose the strongest.
For an oriented edge $x\to y$, call $x$ its tail and $y$ its head.

\begin{lemma}\label{lem:clique-order}
Let the edges of $K_d$ be oriented and linearly ordered, and let $1\le j\le d-1$. There exist a set $B$ of $d-j$ vertices and at least $r_j=\binom d2-\binom j2$ retained edges such that every vertex outside $B$ occurs among the retained edges and its first occurrence, in their inherited order, is as the head of an edge.

Consequently, the tail of every retained edge belongs to $B$ or occurs in an earlier retained edge. Exactly $j$ retained edges contain the first occurrence of a vertex outside $B$. In each of the remaining edges, of which there are at least $r_j-j=r_{j+1}=\binom d2-\binom{j+1}{2}$, each endpoint belongs to $B$ or occurs in an earlier retained edge.
\end{lemma}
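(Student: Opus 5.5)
The plan is a greedy induction on $j$. We build the free set $F=\{w_1,\dots,w_j\}$ one vertex at a time and set $B=V(K_d)\setminus F$. Start with $F=\varnothing$, $B_0=V(K_d)$, and all $\binom d2$ edges retained. At step $i+1$, with current $B_i$ of size $d-i$, let $e_{i+1}$ be the earliest edge (in the given linear order) with both endpoints in $B_i$. Let $w_{i+1}$ be its head, so its tail lies in $B_{i+1}:=B_i\setminus\{w_{i+1}\}$. Then drop those edges joining $w_{i+1}$ to $\{w_1,\dots,w_i\}$ that precede $e_{i+1}$. There are at most $i$ of them, so after $j$ steps at most $0+1+\dots+(j-1)=\binom j2$ edges are dropped. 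This gives the bound $r_j=\binom d2-\binom j2$ on the number of retained edges.

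Correctness would rest on an invariant: among retained edges, the first occurrence of each $w_k$ is exactly $e_k$, where $w_k$ is the head and the tail lies in $B_k\supseteq B$. Two facts should give this. First, since $B_{i}$ shrinks, the edges $e_1<e_2<\dots$ are increasing in the order. Second, any retained edge at $w_{i+1}$ that precedes $e_{i+1}$ must go to an earlier free vertex, because every edge inside $B_i$ comes no earlier than $e_{i+1}$; all such edges have been dropped. The ``consequently'' parts then follow by reading the retained edges in order. The tail of each retained edge is in $B$, or is a free vertex already introduced, since a free vertex's first appearance is never as a tail. Exactly $j$ retained edges, namely $e_1,\dots,e_j$, introduce new vertices. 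The remaining at least $r_j-j=r_{j+1}$ edges have both endpoints in $B$ or already seen.

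The main obstacle is that dropping an edge $w_{i+1}w_k$ with $k\le i$ could delete $e_k$ itself. This happens exactly when $w_{i+1}$ is the tail of $e_k$. In that case $w_{i+1}$ has already appeared as a tail before $e_{i+1}$, and removing $e_k$ would destroy the invariant for $w_k$.

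To repair this, I would first try to choose $w_{i+1}$ more carefully. Among vertices of $B_i$ that have not yet served as a tail of some $e_k$, pick the one whose earliest edge into $B_i$ is earliest, and check that this edge has it as head. Failing that, I would run the greedy in reverse order of time: pick $B$ as the last vertices to be ``used as tails'' and argue by a charging argument. Each problematic edge $e_k$ is kept and replaces a different dropped edge, so the number of dropped edges stays at most $\binom j2$. Settling this bookkeeping, so that at most $i$ edges are lost at step $i+1$ while all earlier $e_k$ survive, is the step I expect to need the most care.
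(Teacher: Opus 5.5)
\textbf{Gap: the forward scheme cannot be repaired.} The obstacle you flag at the end is not a bookkeeping issue. In your construction $e_1<e_2<\dots<e_j$ and $e_k$ lies inside $B_{k-1}$, so the tail of $e_k$ is either in $B$ or equal to some later $w_i$ with $i>k$. In the second case $w_i$ occurs as a tail in the retained edge $e_k$, which precedes $e_i$, and the invariant fails; your first repair simply forbids freeing such tails. Hence in every successful run all $j$ introducing edges have their tails in $B$. Moreover, you only ever discard edges between two free vertices. Both restrictions are fatal.

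Take $d=3$, $j=2$ and the cyclically oriented triangle with edges in the order $a\to c$, $b\to a$, $c\to b$. Since $|B|=1$ and every vertex has out-degree one, no $B$ admits two introducing edges with tails in $B$. Checking the three choices of $B$ directly, the only valid certificate is $B=\{a\}$ with retained edges $a\to c$, $c\to b$. Here $b$ is introduced through the free vertex $c$, and the discarded edge $b\to a$ joins a free vertex to $B$. Your greedy instead takes $e_1=a\to c$, $w_1=c$, then $e_2=b\to a$, $w_2=a$, and must discard $e_1$.

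\textbf{What the backward approach still needs.} Scanning backwards in time is the direction the paper takes, but your proposal lacks the ingredient that makes it work. You need a rule that, when the current last edge $x\to y$ is scanned, decides whether to free $y$ or merely to retain $x\to y$. Freeing $y$ means retaining $x\to y$ and discarding all remaining, earlier edges at $y$, possibly edges into $B$. You also need an invariant guaranteeing two things: at most $\ell-1$ edges are discarded when $\ell$ free vertices remain to be chosen, and exactly $j$ vertices get freed before the edges run out.

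The paper obtains this from the vertex cover number $\beta$ of the remaining graph $H$. It maintains $\beta(H)\ge\ell$ and frees $y$ exactly when $\beta(H-xy)<\ell$. A minimum cover of $H-xy$ then has size $\ell-1$, avoids $x$ and $y$, and contains $N_H(y)\setminus\{x\}$. Hence $d_H(y)\le\ell$ and $\beta(H-y)=\ell-1$. Since $\beta(K_d)=d-1\ge j$ and $\beta=0$ once no edges remain, exactly $j$ vertices are freed and at most $\sum_{\ell=1}^{j}(\ell-1)=\binom j2$ edges are discarded. Without such a potential, or a charging argument actually carried out, your proposal does not establish the lemma.
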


\begin{proof}
We choose the free variables and retained edges by working
backwards. Let $\beta(H)$ be the minimum size of a vertex cover
of $H$, and start with $H=K_d$ and $\ell=j$. Take the last remaining
edge $x\to y$. If $\beta(H-xy)\ge\ell$, delete only this edge.
Otherwise delete $y$ and all its incident edges, designate its
name as a free variable, and decrease $\ell$ by one. In either
case, retain $x\to y$. Repeat until no edges remain, keeping
isolated vertices throughout.

We maintain $\beta(H)\ge\ell\ge0$. At a vertex deletion,
$\beta(H)=\ell$ and $\beta(H-xy)=\ell-1$. A minimum cover of
$H-xy$ avoids $x,y$ and contains every neighbour of $y$ except $x$.
Thus $d_H(y)\le\ell$ and $\beta(H-y)=\ell-1$, preserving the
inequality.

At termination $\ell=0$, so exactly $j$ vertices have been deleted.
These deletions remove at most
$j+(j-1)+\cdots+1=\binom{j+1}{2}$ edges, leaving at least
$r_{j+1}$ individual edge deletions. Specify the names of the
surviving vertices and read the retained edges in their original
order. Each deleted vertex is first encountered as the head of
the edge that caused its deletion, whose tail is already identified.
Every individually deleted edge has both endpoints already
identified. There are therefore at least $j+r_{j+1}=r_j$ retained
edges.
\end{proof}

We construct a certificate using the backward procedure from the proof
of Lemma~\ref{lem:clique-order}. It consists of the surviving vertex names
and the retained oriented edges with their birth rounds listed
chronologically. Throughout the list, each deleted vertex name is
replaced by a distinct free variable.

Come back to our $K_4$ example. In the $K_4$ example, at the first recorded round we require
that $a$ is offered and assign Builder's chosen neighbour to $x$. At the second, we require that $b$ is offered and assign Builder's chosen neighbour to $y$. These choices must give four distinct
vertices $a,b,x,y$. Each of these two checks has conditional probability at most $1/n$. The remaining three recorded moves must be $a\to y$, $b\to x$, and $a\to b$, respectively, with these same values of $x,y$. Both endpoints are now determined, so
each check has conditional probability at most $s/[n(n-1)]$.
Thus a fixed certificate is satisfied with probability at most
\[
 \frac{1}{n^2}\left(\frac{s}{n(n-1)}\right)^3.
\]

\begin{proof}[Proof of Theorem~\ref{thm:main-clique-lower}]
Fix a Builder strategy and $1\le j\le d-1$ and suppose that
$ms\le n(n-1)$. Use $[d]$ as the abstract vertex set.
There are $O_d(1)$ choices of the set $B$ of $d-j$ vertices
and the ordered, oriented retained edges, and at most $n^{d-j}$
injective maps $\varphi:B\to[n]$.

Fix these choices and the recorded rounds. Read the retained
edges chronologically, extending $\varphi$ according to
Builder's choices. For each retained edge $x\to y$, consider its recorded round. By Lemma~\ref{lem:clique-order}, $\varphi(x)$ is already defined,
and we require that the offered vertex is $\varphi(x)$.
Let $v$ be Builder's chosen vertex. If $\varphi(y)$ is already
defined, we require $v=\varphi(y)$. If $\varphi(y)$ is not yet
defined, we require that $v$ is outside the current image of
$\varphi$ and extend $\varphi$ by setting $\varphi(y)=v$.
The certificate is satisfied if all these checks succeed.

Conditional on the full preceding history, each of the $j$
checks that extend $\varphi$ succeeds with probability at most
$1/n$, and each other check with probability at most
$s/[n(n-1)]$. For a certificate with $q$ retained edges, summing
over at most $m^q$ choices of the recorded rounds therefore gives
\[
 \left(\frac mn\right)^j
 \left(\frac{ms}{n(n-1)}\right)^{q-j}
 \le
 \left(\frac mn\right)^j
 \left(\frac{ms}{n(n-1)}\right)^{r_{j+1}},
\]
since $q-j\ge r_{j+1}$ and $ms/[n(n-1)]\le1$.
Every clique yields a satisfied certificate by
Lemma~\ref{lem:clique-order}, so a union bound gives
\begin{align}
 \Prob(K_d\subseteq G_m^s)
 &\le C_d n^{d-j}\left(\frac mn\right)^j
       \left(\frac{ms}{n(n-1)}\right)^{r_{j+1}}\notag\\
 &\le C'_d\frac{m^{r_j}s^{r_{j+1}}}{n^{2r_j-d}},
 \label{eq:clique-probability}
\end{align}
where $j+r_{j+1}=r_j$.

Put $T_j=n^{2-d/r_j}s^{-r_{j+1}/r_j}$, so that
$M_d=\max_{1\le j\le d-1}T_j$, and let
$m=\lfloor M_d/\omega(n)\rfloor$, where $\omega(n)\to\infty$.
Since $s\le n-1$ and $j\le d-1$, we have
$T_js/n^2=(s^j/n^d)^{1/r_j}\le1$.
Thus $ms\le n^2/\omega(n)\le n(n-1)$ for large $n$, as required.
Choosing $j$ attaining the maximum in $M_d$,
equation~\eqref{eq:clique-probability} gives
$\Prob(K_d\subseteq G_m^s)\le C'_d\omega(n)^{-r_j}=o(1)$.
The bound is uniform in $s$ and in Builder's strategy, proving
the theorem.
\end{proof}

Each choice of $j$ gives a bound valid for every Builder strategy
and every $s$; we use whichever is strongest. For $d=4$, the choices
$j=1,2,3$ give the scales $n^{4/3}s^{-5/6}$,
$n^{6/5}s^{-3/5}$ and $n^{2/3}$, respectively. These govern the
three ranges of Theorem~\ref{thm:intro-K4}: $s\le n^{4/7}$,
$n^{4/7}\le s\le n^{8/9}$, and $s\ge n^{8/9}$.

\bigskip
We conclude by mentioning two directions for further research. It would be interesting to obtain results for triangle factors and to establish asymptotically tight bounds for cliques both in $s$-semi-random and Achlioptas processes.

\section*{Declaration of AI Assistance}
Authors used ChatGPT-5.5 Pro (OpenAI) to explore the lower bound for the case $d=4$, providing their manuscript as input. The generated argument did not contain the ideas used in the present proof of Theorem \ref{thm:main-clique-lower}. The authors subsequently developed the proof for general $d$ independently. Finally, ChatGPT-6 Pro was used to condense the authors' complete and correct proof of Theorem~3 for the two cases $1\le s\le n^{4/7}$ and $n^{4/7}\le s\le n^{8/9}$, thereby improving readability.

\section*{Acknowledgements} 

The research work of Akhmejanova is supported by the Wenner-Gren Foundations, grant number UPD2024-0154.

\end{document}